\newcommand\blfootnote[1]{%
  \begingroup
  \renewcommand\thefootnote{}\footnote{#1}%
  \addtocounter{footnote}{-1}%
  \endgroup
}
\newcounter{foo}
\theoremstyle{plain}
\newtheorem{thm}[foo]{Theorem}
\newtheorem{prop}[foo]{Proposition}
\newtheorem{lem}[foo]{Lemma}
\newtheorem{cor}[foo]{Corollary}
\theoremstyle{definition}
\newtheorem{defn}[foo]{Definition}
\theoremstyle{remark}
\newtheorem{rem}[foo]{Remark}
\newcommand{\p}{\partial}
\newcommand{\bp}{\overline{\partial}}
\newcommand{\Ric}{\text{Ric}}
\newcommand{\Rm}{\text{Rm}}
\newcommand{\RR}{\mathbb{R}}
\newcommand{\CC}{\mathbb{C}}
\newcommand{\PP}{\mathbb{P}}
\numberwithin{foo}{section}
\numberwithin{equation}{section}
\title{Calabi Symmetry and the Continuity Method}
\begin{document}
\date{\today}
\author{Hosea Wondo}

\maketitle

\begin{abstract}
We study the convergence and curvature blow up of La Nave and Tian's continuity method on a generalised Hirzebruch surface. We show that the Gromov-Hausdorff convergence is similar to that of the  K\"ahler-Ricci flow and obtain curvature  estimates. We also show that a general solution to the continuity method either exist or all times, or the scalar curvature blows up. This behavior is known to be exhibited by the K\"ahler-Ricci flow. 
\end{abstract}

\section{Introduction}
\blfootnote{2020 Mathematics Subject Classification. Primary 53C55; Secondary 32Q15.}
Let $(X,\omega_0)$ be a compact $n$-dimensional K\"ahler manifold and $\omega(t)$ be a deformation of $\omega_0$ given by the La Nave and Tian continuity method 
\begin{equation}\label{cont}
    \begin{cases}
        \omega(t) = \omega_0 - t \Ric(\omega(t)),\\ 
        \omega(0) = \omega_0.
    \end{cases}
\end{equation}


Metric deformation equations have become a central tool in geometric analysis in which a family of metrics deforming to a  possibly degenerate target metric is studied. This idea appears in Yau's solution to the Calabi conjecture \cite{Y78} in which a continuity method  was employed to demonstrate that one could deform an arbitrary metric to one that is Ricci flat. Another direction utilising this idea is in the  study degenerating Ricci flat metric (see \cite{T10,GTZ13,GTZ20,L21}). 

We can also consider deformations according to an evolution with time derivatives, a famous example  being the K\"ahler Ricci flow. The success of the K\"ahler-Ricci flow in solving geometric problems, has generated much interest in the K\"ahler-Ricci flow. As a mathematical tool the K\"ahler Ricci flow is hoped to give geometric classification of algebraic varieties. This classification program is known as Mori's minimal model program and aims to find a suitable simplest representative metric within every birational class. The analytic minimal model program initiated in \cite{ST12} proposes to achieve this by evolving a metric to this simplest metric through the K\"ahelr Ricci flow.  

The continuity method, given by \eqref{cont}, was introduced in \cite{R08, LT16} and is proposed as an alternative to the K\"ahler-Ricci flow in carrying out the minimal model program. This equation can be obtained by formally discretising the K\"ahler Ricci flow. The main advantage of the continuity method is that is maintains a lower Ricci bound along the deformation and therefore comparison geometry techniques, such as Cheeger–Colding–Tian’s compactness theory, can be readily applied. For recent developments pertaining to the continuity method see \cite{ZZ19,yZ19,ZZ20,FGS20,W22}. In the same way as one can generalise the K\"ahler Ricci Flow to the Chern-Ricci Flow for non-K\"ahler manifolds (see \cite{G10,TW15,LT20,TW22}), the continuity method can analogously be generalised to the Chern continuity method (see \cite{SW20,LZ21}). 

To understand geometric deformations such as the flow and continuity method, it is essential to consider several prototype examples. To this end, we consider the following $n$-dimensional K\"ahler manifold which generalises the Hirzebruch surface,

\begin{equation}\label{Xmanifold}
    X = X_{n,k} =  \PP (\mathcal{O}_{\CC \PP^{n-1}} \oplus \mathcal{O}_{\CC \PP^{n-1}}(-k)).
\end{equation}
Here we have $\mathcal{O}_{\CC \PP^{n-1}}$ to denote the trivial bundle over $\CC \PP^{n-1}$ and $\mathcal{O}_{\CC \PP^{n-1}}(-k)$ to be $k$ times tensored hyperplane bundle of $\CC \PP^{n-1}$. The manifold $X_{n,k}$ is a $\CC \PP^1$ hyperplane bundle over $\CC \PP^{n-1}$ and forms an interesting family of manifolds. For instance, the space of Cohomology classes for these manifolds, $H^{1,1}(X,\RR)$, are spanned by just two classes of exceptional divisors, $[D_0]$ and $[D_\infty]$, and therefore is a two dimensional space \cite{SW11}. As a result, the K\"ahler cone can be written explicitly as 
\begin{equation}
\mathcal{K}=\left\{-\frac{a}{k}\left[D_0\right]+\frac{b}{k}\left[D_{\infty}\right] \mid 0<a<b\right\}.
\end{equation}
It is therefore possible to obtain a concise picture of the evolution of Cohomology classes for the continuity method and for other geometric deformations such as the K\"ahler Ricci Flow. 

In \cite{LT16}, it was shown that solution to the continuity method exists until the K\"ahler class leaves the K\"ahler cone, that is up to time 
\begin{equation}\label{SingT}
T=: \sup \left\{t \mid\left[\omega_0\right]-t c_1(X)>0\right\} .
\end{equation}

If the initial metric satisfies the Calabi symmetry, then the solution $\omega(t)$ also satisfies the Calabi symmetry. We aim to study the convergence of such solutions on $X$ and the curvature blow up rate as the solution approaches this singular time. To describe this, we borrow some terminology from the study of finite time K\"ahler Ricci flows. 


\begin{defn}
Let $\omega(t)$ be a solution to the continuity method for $t \in [0,T)$. We say that the curvature tensor norm develops a \textit{Type I singularity} if 
\begin{equation}
    \sup_{X}|\Rm(\omega(t))|_{\omega(t)} \leqslant \frac{C}{T-t}. 
\end{equation}
Otherwise, we say that the curvature tensor norm blows up at a \textit{Type IIa rate}. If at a point $p \in X$, there exists a sequence $t_i \rightarrow T$ as $i \rightarrow \infty$, such that 
\begin{equation}
    |Rm(\omega(t_i))|_{\omega(t_i)} (p) \geqslant \frac{C}{T-t_t},
\end{equation}
then we say that the curvature tensor norm at $p$ blows up at an \textit{essential Type I rate}.  
\end{defn}
We can also apply the above definition to the norm of the Ricci curvature and scalar curvature. 

There has been several studies of the K\"ahler Ricci flow on $X_{n,k}$ and similar manifolds with Calabi Symmetry. The study of Gromov Hausdorff convergence of the K\"ahler Ricci flow on $X_{n,k}$ was done in \cite{SW11}. Curvature blowup rates and a blow up analysis was derived in \cite{S15}. Many arguments and calculations in this manuscript are inspired by these two papers. For a related manifold $\mathbb{P}\left(\mathcal{O}_{\mathbb{P} n} \oplus \mathcal{O}_{\mathbb{P} n}(-1)^{\oplus(m+1)}\right)$, Gromov-Hausdorff convergence was studied in \cite{SY12}.

It will also be convenient to consider the normalised equation for \eqref{cont}
\begin{equation}\label{n1cont}
    \begin{cases}
    \widetilde{\omega}(t) = \frac{1}{T-t}\widetilde{\omega}_0 - \frac{t}{T-t}\Ric(\widetilde{\omega}(t))\\ 
    \omega(0) = \omega_0.
    \end{cases}
\end{equation}

We now state the first main result of the theorem. 
\begin{thm} \label{THM1}
On the manifold $X_{n,k}$ defined by \eqref{Xmanifold}, a solution to the continuity method \eqref{cont} with an initial metric $\omega_0$ satisfying the Calabi symmetry  evolves as follows:
\begin{enumerate}
    \item If $k \geqslant n$ or $1 \leqslant k \leqslant n-1$ with $a_{0}(n+k)>b_{0}(n-k)$, then the solution exists until $T = (b_0-a_0)/2k$ and $\left(X_{n, k}, g(t)\right)$ converges to $\left(\CC \mathbb{P}^{n-1}, a_T g_{\mathrm{FS}}\right)$ as $t \rightarrow T $. The norm of the curvature tensor develops a Type I singularity. 
    \item If $1 \leqslant k \leqslant n-1$ with $a_{0}(n+k)=b_{0}(n-k)$, then the re-scaled equation \eqref{n1cont} converges to the unique K\"ahler Einstein metric $\omega_{KE}$ in the class $C_1(X)$, given such a metric exists. The norm of the curvature tensor blows up at a Type I rate. 
    \item If $1 \leqslant k \leqslant n-1$ with  $a_{0}(n+k) < b_{0}(n-k)$, then the curvature blows up at the rate
    \begin{equation}
        |\Rm(\omega)|_\omega  \leqslant \frac{C}{(T-t)^2}.
    \end{equation}
\end{enumerate}
\end{thm}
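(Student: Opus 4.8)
The plan is to reduce \eqref{cont} to a scalar ODE via the Calabi ansatz, solve it essentially by quadrature, and then extract the distance and curvature asymptotics; the Calabi symmetry may be used throughout since, as recalled above, it is preserved by \eqref{cont}. First I would set up the Calabi-symmetric description of $X_{n,k}$: away from the exceptional divisors $D_0,D_\infty$ it is a $\CC^*$-bundle over $\CC\PP^{n-1}$, a Calabi-symmetric K\"ahler metric is determined by a single profile function $\ph_t$ of the fibre coordinate with $\ph_t'$ sweeping out an interval $(a(t),b(t))$ whose endpoints are affine coordinates on the two-dimensional K\"ahler cone, and smoothness across $D_0$ and $D_\infty$ imposes Calabi's boundary conditions on $\ph_t$. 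Since for fixed $t$ the continuity equation $\Ric(\omega(t))=\tfrac1t(\omega_0-\omega(t))$ is an elliptic complex Monge--Amp\`ere equation, the ansatz turns it into an ODE in the fibre variable which, using that both $\Ric=-i\p\bp\log\det g$ and the Monge--Amp\`ere measure are explicit in $\ph_t$, is effectively first order (in a momentum variable) and integrates to an explicit $\ph_t$ determined by the profile of $\omega_0$ and by $a(t),b(t)$. As $[\omega(t)]=[\omega_0]-tc_1(X)$ is affine in $t$, the endpoints evolve by $a(t)=a_0-(n-k)t$, $b(t)=b_0-(n+k)t$, and the solution exits the cone either when $a(t)=b(t)$ (the $\CC\PP^1$ fibres collapse) or when $a(t)\to0$ (the zero section $D_0$ is contracted); comparing the two times, $(b_0-a_0)/2k$ and $a_0/(n-k)$, yields precisely the trichotomy of the statement --- the fibre collapses first exactly when $k\ge n$ or ($k\le n-1$ and $a_0(n+k)>b_0(n-k)$); the two coincide exactly when $a_0(n+k)=b_0(n-k)$, equivalently $[\omega_0]$ is proportional to $c_1(X)$; and $D_0$ contracts first exactly when $a_0(n+k)<b_0(n-k)$.

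For part (1), following the K\"ahler--Ricci flow analysis of \cite{SW11}, I would use the explicit profile to estimate distances: the $\CC\PP^1$ fibres have diameter $\asymp(b(t)-a(t))^{1/2}=(2k(T-t))^{1/2}\to0$, the restriction of $g(t)$ to a section converges to $a_T g_{\mathrm{FS}}$ where $a_T=a(T)=b(T)>0$, and the submersion-type structure over $\CC\PP^{n-1}$ lets one promote these estimates to Gromov--Hausdorff convergence $(X_{n,k},g(t))\to(\CC\PP^{n-1},a_T g_{\mathrm{FS}})$. For the curvature I would compute $|\Rm(g(t))|_{g(t)}$ from the ansatz --- a finite sum of terms built from $\ph_t,\ph_t',\ph_t''$ --- and check that the base and mixed components remain bounded (as $a(t)\to a_T>0$) while the dominant fibre component is $O((b(t)-a(t))^{-1})=O((T-t)^{-1})$, giving the Type~I bound.

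For part (2), proportionality of $[\omega_0]$ with $c_1(X)$ makes the normalised class $[\widetilde\omega(t)]$ the fixed Fano class $c_1(X)$, so \eqref{n1cont} becomes a continuity method in a fixed class; analysing the resulting family of Calabi-symmetric profiles (or invoking the convergence theory for the continuity method on Fano manifolds) one shows $\widetilde\omega(t)$ converges to $\omega_{KE}$, which is the unique K\"ahler--Einstein metric in $c_1(X)$ when it exists. Since $\omega(t)=(T-t)\widetilde\omega(t)$ and $|\Rm(\widetilde\omega(t))|_{\widetilde\omega(t)}$ stays bounded and bounded away from zero on a fixed region, $|\Rm(\omega(t))|_{\omega(t)}=(T-t)^{-1}|\Rm(\widetilde\omega(t))|_{\widetilde\omega(t)}$ is an essential Type~I rate. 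For part (3), one now has $a(t)=(n-k)(T-t)\to0$ with $D_0$ contracting; the same curvature computation shows that the components of $\Rm$ concentrated near $D_0$ blow up like $a(t)^{-2}=O((T-t)^{-2})$ while no component grows faster, which is the stated bound --- and, since it exceeds $C/(T-t)$, a Type~IIa blow-up.

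The main obstacle is the curvature analysis, specifically the sharp rates near the degenerating locus: one must identify every component of $\Rm$ under the Calabi ansatz and bound it uniformly over $X$ as $b(t)-a(t)\to0$ in part (1) and as $a(t)\to0$ in part (3), where the endpoint asymptotics forced by Calabi's boundary conditions on $\ph_t$ enter delicately and must be tracked with care. Secondary points requiring attention are making the Gromov--Hausdorff statement in part (1) genuinely metric rather than a bare diameter estimate, and pinning the limit of the normalised flow in part (2) to the K\"ahler--Einstein metric, which leans on its assumed existence and uniqueness.
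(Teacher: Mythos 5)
Your overall architecture matches the paper's: reduction via the Calabi ansatz to a profile $u_t(\rho)$, the trichotomy obtained by comparing the two exit times $(b_0-a_0)/2k$ and $a_0/(n-k)$ for the K\"ahler cone, Gromov--Hausdorff convergence in Case I following \cite{SW11}, identification of Case II with Aubin's continuity method after normalisation, and componentwise curvature bounds from the explicit formula for $\Rm$ under the symmetry. However, there are two genuine gaps in the analytic core.

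First, the reduced equation does not ``integrate to an explicit $\ph_t$.'' Under the ansatz, \eqref{cont} becomes $u_t = u_0 - t(n-1)\log u_t' + t\log u_t'' - tn\rho + c_t$, equivalently $u_t'' = e^{(u_t - u_0 + tn\rho - c_t)/t}(u_t')^{n-1}$: a genuinely second-order, non-autonomous ODE whose coefficients involve the arbitrary initial profile $u_0(\rho)$, so there is no quadrature and no closed-form solution from which to ``read off'' asymptotics. Everything you propose to extract from the explicit profile must instead be proved as a priori estimates, and that is the bulk of the paper: a uniform $L^\infty$ bound on the Monge--Amp\`ere potential (maximum principle for the upper bound, pluripotential theory of degenerate Monge--Amp\`ere equations for the lower bound), which yields the two-sided volume bound $\omega^n \asymp (T-t)\Omega$ and hence $u_t'' \asymp (T-t)\,e^{k\rho}(1+e^{k\rho})^{-2}$; the cone inequalities $a_t < u_t' < b_t$; and $|u_t'''|/u_t'' \leqslant C$ obtained by solving the differentiated equation for $u_t'''/u_t''$. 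In Case III the volume is non-collapsed, so the upper bound on $u_t''$ cannot come from the volume estimate at all; the paper runs a separate maximum-principle argument on $H = u_t''/u_t'$, using the Calabi boundary conditions to show $H \to 0$ at $\rho = \pm\infty$. None of this machinery appears in your plan.

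Second, the curvature is not a sum of terms in $\ph_t, \ph_t', \ph_t''$ alone: the component $R_{1\bar 1 1\bar 1}$ involves $u_t^{(4)}$, one derivative beyond anything the profile estimates control directly. The paper's device is to trace the continuity equation to get $R = \tfrac1t(\tr_\omega\omega_0 - n) \leqslant C/(T-t)$, write $R$ explicitly in the profile variables, and isolate the combination $u_t'''^2/u_t''^3 - u_t^{(4)}/u_t''^2$ as $R$ minus terms already bounded; without this (or an equivalent control of the top derivative) the Type I bounds in Cases I--II and the $(T-t)^{-2}$ bound in Case III do not follow. Two smaller points: in Case III the theorem asserts only the upper bound, and your claim that the curvature actually attains the $(T-t)^{-2}$ rate (hence is Type IIa) is not proved in the paper and would require a separate lower-bound argument; and your lower bound on $|\Rm|$ in Case II routes through convergence of $\widetilde\omega(t)$ to $\omega_{KE}$ and is therefore conditional on K\"ahler--Einstein existence, whereas the paper derives the essential Type I rate unconditionally from the scalar-curvature identity together with the upper bound on $u_t''$.
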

 
 
\begin{rem}
    With regards to Case II, the question of the existence of K\"ahler-Einstein metric on Fano manifolds have been extensively studied (for a survey see  \cite{T12}). 
\end{rem}

\begin{thm}\label{THM2}
    Solutions to the continuity method on $X_{n,k}$ satisfying the Calabi Symmetry has scalar curvature bound 
    \begin{equation}
        R(t) \leqslant \frac{C}{T-t}.
    \end{equation}
    Moreover, the scalar curvature blow up in Case I and II is essential in a sense that there exists $(p_i,t_i)$ such that as $i \rightarrow \infty$, $t_i \rightarrow T$ and 
    \begin{equation}
        R(p_i,t_i) \geqslant \frac{C}{T-t_i}.
    \end{equation}
\end{thm}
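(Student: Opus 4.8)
The plan is to use the Calabi symmetry to turn $R(\omega(t))$ into a function of a single real variable and to read off both the bound and its sharpness from the profile function and its prescribed boundary data. Recall from the symmetry reduction that $\omega(t)$ is governed by a strictly convex profile, and that in the momentum coordinate $s=\psi_t'(\rho)$, which ranges over the interval $[a(t),b(t)]$ determined by the K\"ahler class $[\omega_0]-tc_1(X)$, the scalar curvature takes the closed form (cf.\ \cite{SW11,S15})
\begin{equation*}
R(\omega(t)) \;=\; \frac{n(n-1)}{s}-\frac{1}{s^{\,n-1}}\frac{d^2}{ds^2}\big(s^{\,n-1}F_t(s)\big) \;=\; \frac{n(n-1)}{s}-\frac{(n-1)(n-2)F_t(s)}{s^2}-\frac{2(n-1)F_t'(s)}{s}-F_t''(s),
\end{equation*}
where $F_t>0$ on $(a(t),b(t))$ is the profile, $F_t(a(t))=F_t(b(t))=0$, and the boundary derivatives are $F_t'(a(t))=\lambda_0$ and $F_t'(b(t))=-\lambda_\infty$ with $\lambda_0,\lambda_\infty>0$ constants depending only on $k$ (forced by smooth closing-up of $\omega(t)$ across $D_0$ and $D_\infty$). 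This will be combined with the a priori estimates on the profile established earlier, namely $0\le F_t\le C(b(t)-a(t))$, $|F_t'|\le C$ and $|F_t''|\le C/(b(t)-a(t))$ on $[a(t),b(t)]$, and with the explicit linear-in-$t$ behaviour of the endpoints: in Cases~I and~II one has $b(t)-a(t)\sim T-t$, with $a(t),b(t)$ tending to a common positive limit $a_T$ in Case~I and to $0$ at rate $\sim T-t$ in Case~II; in Case~III the contracted endpoint tends to $0$ at rate $\sim T-t$ while $b(t)-a(t)$ stays bounded below.

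For the upper bound I would insert these estimates directly into the formula. The factors $s^{-1}$, $s^{-2}$ are either bounded (Case~I) or bounded by $C/a(t)\le C/(T-t)$ near the contracted endpoint (Cases~II and~III), with $F_t(s)/s^2\le C/(T-t)$ there because $F_t(s)\le\lambda_0(s-a(t))+O((s-a(t))^2)$; the term $F_t''$ is $\le C/(b(t)-a(t))\le C/(T-t)$; and $F_t'$ is bounded. Hence $R(\omega(t))\le C/(T-t)$ in all three cases. As a consistency check, tracing \eqref{cont} with respect to $\omega(t)$ gives $R(\omega(t))=\tfrac1t\big(\operatorname{tr}_{\omega(t)}\omega_0-n\big)$, so the bound is equivalent to $\operatorname{tr}_{\omega(t)}\omega_0\le C/(T-t)$, of which the displayed formula is just the Calabi-symmetric expression; note that although $|\Rm|$ blows up like $(T-t)^{-2}$ in Case~III, the formula for $R$ contains no factor degenerating faster than $s^{-1}$, so there is no contradiction.

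For the essential blow-up I treat Cases~I and~II separately. In Case~I, the mean value theorem applied to $F_t'$ produces $s_t\in(a(t),b(t))$ with $F_t''(s_t)=\dfrac{F_t'(b(t))-F_t'(a(t))}{b(t)-a(t)}=-\dfrac{\lambda_0+\lambda_\infty}{b(t)-a(t)}\le-\dfrac{c}{T-t}$; let $p_t$ be any point of the hypersurface of $X_{n,k}$ on which the momentum coordinate equals $s_t$ (on which $R(\omega(t))$ is constant, by the symmetry). Since in Case~I the whole interval $[a(t),b(t)]$ converges to $\{a_T\}$ with $a_T>0$, at $s=s_t$ the first three terms of the formula are $O(1)$ (using $F_t\le C(b(t)-a(t))\to0$ and $|F_t'|\le C$), so $R(\omega(t))(p_t)=-F_t''(s_t)+O(1)\ge c'/(T-t)$ once $t$ is close to $T$; taking $t_i\to T$ and $p_i=p_{t_i}$ proves the claim. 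In Case~II (assuming, as in Theorem~\ref{THM1}(II), that $\omega_{KE}$ exists), the rescaled metrics $\widetilde\omega(t)=(T-t)^{-1}\omega(t)$ from \eqref{n1cont} converge to $\omega_{KE}\in c_1(X)$, which satisfies $\operatorname{Ric}(\omega_{KE})=\omega_{KE}$ and hence $R(\omega_{KE})=n$; since scalar curvature rescales as $R(\widetilde\omega(t))=(T-t)R(\omega(t))$, the convergence forces $(T-t)R(\omega(t))\to n$, so $R(\omega(t))\ge\tfrac n2\,(T-t)^{-1}$ for $t$ near $T$ and any choice of $p_i$ works.

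The main obstacle is to have in hand, and to be able to use up to the collapsing region, the a priori estimates on the profile — above all the $C^2$-type bound $|F_t''|\le C/(b(t)-a(t))$, which is the substantive input from analysis and which must coexist with the faster growth of $|\Rm|$ in Case~III. A secondary point is to justify passing $R(\widetilde\omega(t))\to R(\omega_{KE})$ to the limit in Case~II: since the class of $\widetilde\omega(t)$ along \eqref{n1cont} is constantly $c_1(X)$ there, the standard Monge--Amp\`ere a priori estimates yield $C^\infty_{\mathrm{loc}}$ convergence of the rescaled profile, which suffices, but this should be recorded carefully.
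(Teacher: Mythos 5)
There is a genuine gap in your upper bound argument: the profile estimate $|F_t''|\le C/(b(t)-a(t))$ that you feed into the momentum-coordinate formula for $R$ is not an available a priori input --- in the paper's logic it is a \emph{consequence} of the scalar curvature bound, not a premise. Concretely, with $s=u_t'(\rho)$ and $F_t(s)=u_t''(\rho)$ one has $F_t''(s)=\frac{u_t^{(4)}}{(u_t'')^2}-\frac{(u_t''')^2}{(u_t'')^3}$, which is exactly the $R_{1\bar{1}1\bar{1}}$-type combination that the paper controls only \emph{after} bounding $R$ (and in Case III only by $C(T-t)^{-2}$, not by $C(b(t)-a(t))^{-1}$, so the estimate you assume is not even known there). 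You flag this yourself as ``the main obstacle,'' which is an accurate self-diagnosis: as written the argument is circular. The fix is the route you relegate to a ``consistency check'': the continuity method gives $\Ric(\omega)=\tfrac1t(\omega_0-\omega)$ pointwise, hence $R=\tfrac1t(\operatorname{tr}_\omega\omega_0-n)$, and under Calabi symmetry $\operatorname{tr}_\omega\omega_0=\frac{u_0''}{u_t''}+(n-1)\frac{u_0'}{u_t'}$. The upper bound then needs only a \emph{lower} bound on $u_t''$ of order $(T-t)e^{k\rho}(1+e^{k\rho})^{-2}$ (coming from the $L^\infty$ estimate on the potential and the volume-form comparison, Lemmas \ref{Ilemphi} and \ref{Iulem} and their analogues in the other cases) together with a lower bound on $u_t'$, with the behaviour near $D_0$ and $D_\infty$ handled by the expansions \eqref{lim1u}--\eqref{lim2u}; no second-derivative control of the profile is required, and the argument works uniformly in all three cases.

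For the essential blow-up, your mean-value-theorem argument in Case I is correct and genuinely different from the paper's: the paper instead uses the \emph{upper} bound $u_t''\le C(T-t)e^{k\rho}(1+e^{k\rho})^{-2}$ to get $u_0''/u_t''\ge c/(T-t)$, hence blow-up at \emph{every} point, whereas your version yields blow-up only along one hypersurface per time --- still sufficient for the statement, but weaker, and it relies on the global bound $|F_t'|\le C$ (i.e.\ $|u_t'''|/u_t''\le C$), which must be quoted from Lemma \ref{Iulem}. In Case II, however, your argument is conditional on the existence of $\omega_{KE}$ and on convergence of the rescaled metrics, neither of which Theorem \ref{THM2} assumes; the paper's proof is unconditional, again via the upper bound on $u_t''$ from the volume-form estimate. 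You should replace the K\"ahler--Einstein argument by that one.
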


The behaviour of scalar curvature for the continuity method is not well understood, even for infinite time solutions to \eqref{cont}. On the other hand, we have several results regarding scalar curvature  under the K\"ahler-Ricci flow;
\begin{itemize}
    \item[(1)] The K\"ahler-Ricci flow either the flow exists for all time, or the scalar curvature blows up  \cite{Zh10}.
    \item[(2)] Solutions to the normalised K\"ahler-Ricci flow with semi-ample canonical line bundle has bounded scalar curvature \cite{Z09,ST16}. 
    \item[(3)] In the same setup as the previous point, the scalar curvature away from singular divisors converges to $-kod(X)$ \cite{J20}. 
\end{itemize}

Theorem \ref{THM2} alludes that scalar curvature must blow up for finite time singularities, resembling the result from finite time K\"ahler-Ricci flows in \cite{Zh10}. We show that solutions to the continuity method \eqref{cont} also satisfy point (1). Point (2) is currently not known globally, however a bound can be obtained on compact sets away from singular fibers \cite{ZZ19}. For point (3), Y. Zhang and Jian proved that the a twisted scalar curvature converges to $-kod(X)$. 
\begin{thm}\label{THM3}
Let $\omega(t)$ be a solution to the continuity method, \eqref{cont}. Then either the solution exists for all time, or the scalar curvature blows up; 
\begin{equation}
\sup _{X \times[0, T)} R\left(\omega(t)\right) =+\infty.
\end{equation}
\end{thm}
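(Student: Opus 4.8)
The plan is to argue by contrapositive: suppose the scalar curvature stays bounded on $X \times [0,T)$ and $T < \infty$, and deduce that in fact $\omega(t)$ extends past $T$, contradicting maximality of $T$ given by \eqref{SingT}. So assume $\sup_{X \times [0,T)} |R(\omega(t))| \leqslant \Lambda < \infty$. Writing $\omega(t) = \omega_0 + i\partial\bar\partial \varphi(t)$ with the complex Monge--Ampère formulation of \eqref{cont}, i.e.\ $(\omega_0 - t\,\Ric(\omega(t)))^n = \omega(t)^n$ rewritten via a reference metric in $c_1(X)$ so that $\omega(t)^n = e^{\varphi/t + F}\,\omega_{\mathrm{ref}}^n$ for a fixed smooth $F$, the first task is to show the potentials $\varphi(t)$ stay uniformly bounded in $C^0$ as $t \to T$. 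This follows from the standard fact (already available in \cite{LT16}) that $[\omega_0] - t c_1(X) \to [\omega_0] - T c_1(X) =: \alpha_T$, and since $T$ is the \emph{first} time the class leaves $\mathcal{K}$, $\alpha_T$ is nef and big; combining with a uniform $L^\infty$ estimate of Ko\l{}odziej type (or Demailly--Pali / Eyssidieux--Guedj--Zeriahi) gives $\|\varphi(t)\|_{C^0} \leqslant C$ uniformly. The monotonicity $\dot\varphi \leqslant 0$ (from differentiating the continuity equation, as in \cite{LT16}) makes this clean: $\varphi(t)$ decreases to a bounded limit $\varphi_T$.

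The heart of the argument is to upgrade the scalar curvature bound, together with the $C^0$ bound on $\varphi$, to a uniform two-sided bound on $\omega(t)$, i.e.\ $C^{-1}\omega_0 \leqslant \omega(t) \leqslant C\omega_0$ on $X \times [0,T)$. The lower Ricci bound is automatic from \eqref{cont}: $\Ric(\omega(t)) = \tfrac{1}{t}(\omega_0 - \omega(t)) \geqslant -\tfrac{1}{t}\omega(t) \geqslant -\tfrac{1}{T_0}\omega(t)$ for $t$ bounded below, so comparison geometry applies. For the metric equivalence, I would run the Aubin--Yau second-order estimate on the Monge--Ampère equation: tracing the equation gives $\Delta_{\omega(t)}$ of $\log \tr_{\omega_0}\omega(t)$ controlled by curvature of $\omega_0$ and by $\tr_{\omega(t)}\omega_0$, and the key input replacing a direct Laplacian bound is precisely that $R(\omega(t)) = \tr_{\omega(t)}\Ric(\omega(t)) = \tfrac{1}{t}(\tr_{\omega(t)}\omega_0 - n)$ is bounded, which pins down $\tr_{\omega(t)}\omega_0 \leqslant C$; feeding this back through a Tsuji/Yau-type maximum principle argument with the $C^0$ bound on $\varphi$ as the auxiliary function yields $\tr_{\omega_0}\omega(t) \leqslant C$, and then the Monge--Ampère density bound $\omega(t)^n / \omega_0^n = e^{\varphi/t+F}$, which is bounded above and below, gives the reverse inequality. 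Once the metrics are uniformly equivalent and $\Ric$ is bounded (it equals $\tfrac1t(\omega_0 - \omega)$, now bounded in $\omega_0$-norm), local higher-order (Calabi $C^3$, then Schauder/parabolic-type or elliptic bootstrap on the elliptic-in-space continuity equation) estimates give uniform $C^\infty_{\mathrm{loc}}$ bounds on $\omega(t)$ on $[0,T)$, so $\omega(T) := \lim_{t\to T}\omega(t)$ is a smooth Kähler metric; but $[\omega(T)] = \alpha_T \notin \mathcal{K}$ is impossible — more precisely one then solves \eqref{cont} starting near $T$ to extend past $T$, contradicting the definition of $T$.

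The main obstacle is the second-order estimate: unlike the Kähler--Ricci flow where one has a parabolic evolution for $\tr_{\omega_0}\omega$ and a time-derivative term to exploit (Zhang's argument in \cite{Zh10}), here \eqref{cont} is elliptic in space with $t$ a parameter, so one must be careful that constants in the maximum principle do not degenerate as $t \to T$; the degeneration of the class $\alpha_T$ (only nef and big, not Kähler) means the estimate $\tr_{\omega_0}\omega(t)\leqslant C$ cannot hold uniformly on all of $X$ unless the scalar curvature bound genuinely rules out the degeneration — and indeed I expect one does \emph{not} get a global metric equivalence, but rather the correct statement is that the scalar curvature bound forces $\tr_{\omega(t)}\omega_0$ to stay bounded, hence $\omega(t)^n$ stays bounded below (no volume collapse of the Monge--Ampère measure), hence $\alpha_T^n > 0$ with enough control to contradict $\alpha_T$ lying on the boundary of $\mathcal{K}$ where the volume is forced to behave in a controlled way; reconciling this dichotomy carefully — showing the scalar bound is incompatible with $[\omega_0]-Tc_1(X)$ being a boundary class — is where the real work lies, and Calabi symmetry is not needed here, matching the fully general statement of the theorem. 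An alternative, perhaps cleaner, route for this last step is to use the formula $\tfrac{d}{dt}\int_X \varphi\,\omega(t)^n$ or the $I$-$J$ energy functionals and show a scalar curvature bound gives a uniform bound on $\int_X |\nabla\varphi|^2$ or on $\mathrm{Ent}$, preventing the degeneration; I would try both and keep whichever avoids the most technical overhead.
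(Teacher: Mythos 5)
Your proposal has the right skeleton and, crucially, the right starting point: taking the trace of \eqref{cont} to convert the scalar curvature bound into $\tr_{\omega(t)}\omega_0\leqslant C$ is exactly the paper's first move (\eqref{tr1}), and arguing by contradiction with the maximality of $T$ from \eqref{SingT} is the right frame. Note that this trace bound already gives you the lower metric bound $\omega(t)\geqslant C^{-1}\omega_0$ for free (each eigenvalue of $\omega_0$ with respect to $\omega$ is at most $C$), so you do not need the Monge--Amp\`ere density to be bounded \emph{below}, and hence you do not need a lower bound on $\varphi$. This matters: your appeal to a Ko\l odziej/EGZ estimate via ``$\alpha_T$ is nef and big'' is not justified in general --- in the collapsing situation (e.g.\ Case II of this paper) $\alpha_T^n=0$, so $\alpha_T$ need not be big and that estimate is not available. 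The paper only ever uses the easy maximum-principle \emph{upper} bound on $\varphi$ (Lemma \ref{c0}), which yields the volume upper bound \eqref{volequiv1}.

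The two places where you hedge are precisely where the paper's argument is elementary, and your self-identified ``main obstacle'' dissolves. First, the reverse trace bound $\tr_{\omega_0}\omega(t)\leqslant C$ does not require an Aubin--Yau/Tsuji maximum principle (which, as you rightly worry, is delicate when the reference class degenerates); it follows pointwise from the linear-algebra inequality
\begin{equation*}
\tr_{\omega_0}\omega \;\leqslant\; (n-1)!\,\bigl(\tr_{\omega}\omega_0\bigr)^{n-1}\,\frac{\omega^n}{\omega_0^n},
\end{equation*}
combined with \eqref{tr1} and the volume upper bound --- this is \eqref{tr2}. Second, the endgame does not require higher-order estimates, smooth convergence, or re-solving the equation past $T$: once one has $\omega(t)\geqslant C^{-1}\omega_0$ uniformly, the Demailly--Paun numerical characterisation of the K\"ahler cone (Theorem 4.2 of \cite{D04}, quoted as \eqref{Dchar}) shows that the limit class $[\omega_0]-Tc_1(X)$ still satisfies the positivity criterion, hence is K\"ahler, contradicting the definition of $T$. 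Your alternative via Evans--Krylov and bootstrapping would likely also work once the metric equivalence \eqref{bad} is in hand, but it is substantially heavier; your energy-functional alternative is left entirely unexecuted. As written, the proposal is a correct outline with the key observation in place, but it does not yet constitute a proof because the second-order estimate and the final contradiction are each left as an unresolved choice among candidate strategies rather than carried out.
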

\begin{rem}
    The scalar curvature blowup must be from above since the Ricci curvature of solutions for \eqref{cont} is always bounded from below. 
\end{rem}

We end the introduction with an outline of the paper. In Section \ref{S2}, we describe the Calabi Symmetry condition and the geometry of $X_{n,k}$ in more detail. This will allows us to introduce the three cases for types of behaviour for solutions of \eqref{cont} at the end of this section. The next section contains three subsections where we analyse each case separately. They will follows the same format; first convergence as the solutions approach singular time established, then curvature estimates are obtained. In Section \ref{S6}, we prove Theorem \ref{THM3}. \\ 

\textbf{Acknowledgement}: I would like to thank my advisors Zhou Zhang and Haotian Wu for their encouragement and support. 

\section{Preliminaries}\label{S2}

In this section we recall the Calabi Symmetry and briefly describe the geometry of $X_{n,k}$. For a detailed construction, see \cite{Cal82} or preliminary/background sections in \cite{SW11,SY12,S15}. As stated in the introduction, the manifold
\begin{equation}
X=\mathbb{P}\left(\mathcal{O}_{\mathbb{C P}^{ n-1}} \oplus \mathcal{O}_{\mathbb{C P}^{n-1}}(-k)\right),
\end{equation}
is a $\CC \PP^1$ bundle over $\CC \PP^{n-1}$ obtained by blowing up $\CC \PP^{n-1}$. Let $D_0$ be the exceptional divisor of $X$ defined by the image of the section $(1,0)$ of $\mathcal{O}_{\mathbb{C P}^{n-1}} \oplus \mathcal{O}_{\mathbb{C P}^{n - 1}}(-k)$ and $D_{\infty}$ be the divisor of $X$ defined by the image of the section $(0,1)$ of $\mathcal{O}_{\mathbb{C P}^{n-1}} \oplus \mathcal{O}_{\mathbb{C P}^{n-1}}(-k)$. Both the 0-section $D_0$ and the $\infty$-section are complex hypersurfaces in $X$ isomorphic to $\mathbb{C P}^{n-1}$. The classes $[D_\infty]$ and $[D_0]$ span $H^{1,1}(X,\RR)$, and the K\"ahler cone is given by 
\begin{equation}
\mathcal{K}=\left\{-\frac{a}{k}\left[D_0\right]+\frac{b}{k}\left[D_{\infty}\right] \mid 0<a<b\right\}.
\end{equation}

Using the bundle map  $\pi: X_{n,k} \rightarrow \CC \PP^{n-1}$, we can pull back the Fubini-study metric on $\CC \PP^{n-1}$. Taking its cohmology class gives the  the class $[D_H]$, which is realted to $[D_0]$ and $[D_\infty]$ by 
\begin{equation}
k\left[D_{H}\right]=\left[D_{\infty}\right]-\left[D_{0}\right].
\end{equation}
We can then rewrite the K\"ahler cone in a more instructive basis  
\begin{equation}
\mathcal{K}=\left\{\frac{b-a}{k}\left[D_{\infty}\right]+a\left[D_{H}\right] \mid 0<a<b\right\}.
\end{equation}


To introduce the Calabi Symmetry into our analysis, we consider the standard holomorpic coordinates $(x_1,...,x_n)$ on $\CC^n \backslash \{0\}$, the manifold $\CC \PP^{n-1} := (\CC^n \backslash \{0\})/\CC^*$ is charted by $n$ coordinate charts $U_1,...,U_n$ where $U_i$ is characterised by $x_i \neq 0$. For fixed $i$, the holomorphic coordinates are given by $z_{(i)}^j = x_j/x_i$. In this coordinate system, the fiber coordinate $y_{(i)}$ on $\pi^{-1}(U_i)$ transforms by 
\begin{equation}
y_{(\ell)}=\left(\frac{x_{\ell}}{x_{i}}\right)^{k} y_{(i)}, \quad \text { on } \pi^{-1}\left(U_{i} \cap U_{\ell}\right),
\end{equation}
where $\pi: X_{n,k} \rightarrow \CC \PP^{n-1}$ is the bundle map.  Consider K\"ahler metrics such that $g_{i \bar{j}} = \p_i \p_{\bar{j}} u(\rho)$ for a potential $u=u(\rho)$ where 
\begin{equation}
\rho=\log \left(\sum_{i=1}^{n}\left|x_{i}\right|^{2}\right).
\end{equation}
The metrics defined in this way are invariant under the action of 
$$
G_{k} \cong U(n) / \mathbb{Z}_{k}.
$$
The condition for which $g_{ij}$ is K\"ahler is given by the following Lemma. 

\begin{lem}[Calabi \cite{Cal82}]
    A smooth convex function $u = u(\rho)$ defined on $(-\infty,\infty)$ generates a K\"ahler metric $g_{i \bar{j}} = \p_i \p_{\bar{j}} u(\rho)$ if 
    \begin{enumerate}
        \item $u'(\rho)>0$
        \item $u''(\rho) >0$
        \item There exist smooth functions $u_{[D_0]}, u_{[D_\infty]}:[0, \infty) \rightarrow \mathbb{R}$ with $u_{[D_0]}^{\prime}(0)>0$, $u_{[D_{\infty}]}^{\prime}(0)>0$ such that 
        \begin{equation}\label{DivPot1}
        u_{[D_0]}\left(e^{k \rho}\right)=u(\rho)-a \rho,
        \end{equation}
        and 
        \begin{equation}\label{DivPot2}
                \quad u_{[D_\infty]}\left(e^{-k \rho}\right)=u(\rho)-b \rho,
            \end{equation}
            for all $\rho \in \RR$.
        \end{enumerate}
\end{lem}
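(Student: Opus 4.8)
The plan is to check that the Hermitian form $g_{i\bar j}=\partial_i\partial_{\bar j}u(\rho)$ is positive definite on $X_{n,k}\setminus(D_0\cup D_\infty)$ using conditions (1)--(2), and then to use (3) to show that it extends to a smooth positive $(1,1)$-form across each of $D_0$ and $D_\infty$; closedness is automatic once one knows $g$ is locally $i\partial\bar\partial$ of a smooth function (which itself needs to be checked near the divisors).

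For the first point I would compute on $\CC^n\setminus\{0\}$, which after quotienting by the natural $\ZZ_k$-action is exactly $X_{n,k}\setminus(D_0\cup D_\infty)$, with $\rho=\log\sum_i|x_i|^2$. From $\partial_i\rho=\bar x_i e^{-\rho}$ one gets $g_{i\bar j}=e^{-\rho}u'(\rho)\,\delta_{ij}+e^{-2\rho}\big(u''(\rho)-u'(\rho)\big)\bar x_i x_j$, hence, for $v\in\CC^n$ (with $\angl{x,v}=\sum_i\bar x_i v^i$),
\begin{equation*}
    g_{i\bar j}v^i\bar v^j = e^{-\rho}u''(\rho)\,\left|\frac{\angl{x,v}}{|x|}\right|^2 + e^{-\rho}u'(\rho)\,\left|v-\frac{\angl{x,v}}{|x|^2}\,x\right|^2 .
\end{equation*}
So $g$ is diagonalised, with eigenvalue $e^{-\rho}u''(\rho)$ in the fibre (``radial'') direction and eigenvalue $e^{-\rho}u'(\rho)$, with multiplicity $n-1$, on the orthogonal complement (the directions tangent to $\CC\PP^{n-1}$). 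Thus positivity there is precisely conditions (1) and (2), and since $\rho$ is $U(n)$-invariant the resulting metric is $G_k=U(n)/\ZZ_k$-invariant.

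Next I would work in a chart $\pi^{-1}(U_i)$ adapted to $D_0$: base coordinates $z=(z^1,\dots,z^{n-1})$ and fibre coordinate $w=y_{(i)}$, so that $D_0=\{w=0\}$ and $e^{k\rho}=|w|^2(1+|z|^2)^k$. Condition (3) lets me split $u(\rho)=a\rho+u_{[D_0]}(e^{k\rho})$. The first term contributes $i\partial\bar\partial(a\rho)=a\,\pi^*\omega_{\mathrm{FS}}$ near $D_0$, because $\tfrac ak\log|w|^2$ is pluriharmonic --- this is exactly what makes the a priori singular part of $a\rho$ drop out of the metric. The second term $u_{[D_0]}\!\big(|w|^2(1+|z|^2)^k\big)$ is a genuinely smooth function of $(z,w)$ near $D_0$, since $u_{[D_0]}$ is smooth on $[0,\infty)$ and $|w|^2(1+|z|^2)^k$ is smooth and non-negative; so its $i\partial\bar\partial$ is smooth there too. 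Hence $g$ extends smoothly across $D_0$, and it only remains to verify positivity on $\{w=0\}$: with $s=|w|^2(1+|z|^2)^k$ one finds that $g_{w\bar z^j}$ vanishes on $\{w=0\}$ and $g_{w\bar w}=u''_{[D_0]}(s)|w|^2(1+|z|^2)^{2k}+u'_{[D_0]}(s)(1+|z|^2)^k$, so on $D_0$ the metric is block diagonal with fibre entry $u'_{[D_0]}(0)(1+|z|^2)^k>0$ and base block $a(\omega_{\mathrm{FS}})_{z^i\bar z^j}$, hence positive definite. The neighbourhood of $D_\infty$ is handled identically using the fibre coordinate $1/y_{(i)}$ vanishing on $D_\infty$, the decomposition $u(\rho)=b\rho+u_{[D_\infty]}(e^{-k\rho})$, and $u'_{[D_\infty]}(0)>0$.

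Combining the three local pictures, $g$ is a smooth, closed, positive $(1,1)$-form on $X_{n,k}$, i.e.\ a $G_k$-invariant Kähler metric. I expect the one genuine subtlety to be the second step: the ``radial'' coordinate on $\CC^n\setminus\{0\}$ is \emph{not} the coordinate that extends holomorphically across $D_0$ (that is $w=y_{(i)}$), and the content of (3) is precisely that, read through $w$, the potential $u(\rho)$ splits as a smooth function plus a pluriharmonic term, so that both the smooth extension of $g$ and its positivity at the divisors are governed exactly by the boundary data $u'_{[D_0]}(0),u'_{[D_\infty]}(0)>0$.
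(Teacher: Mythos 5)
Your argument is correct and is essentially Calabi's original one, which the paper cites rather than reproves: positivity away from the divisors via the eigenvalue splitting of $g$ into $e^{-\rho}u''$ in the fibre direction and $e^{-\rho}u'$ (multiplicity $n-1$) in the base directions, followed by smooth extension and positivity across $D_0$ and $D_\infty$ using the boundary potentials, with the $\log|w|^2$ part of $a\rho$ contributing only the pluriharmonic/pullback term $a\,\pi^*\omega_{FS}$. The one hypothesis you use tacitly is that the base blocks $a\,\omega_{FS}$ along $D_0$ and $b\,\omega_{FS}$ along $D_\infty$ are positive, i.e.\ $0<a<b$; this is part of the paper's standing K\"ahler-cone convention rather than of conditions (1)--(3), and is worth stating explicitly.
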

The divisor $D_0$ corresponds to $\rho=-\infty$ where as the $D_{\infty}$ corresponds to $\rho=\infty$. Furthermore, the coefficients $a$ and $b$ are the values of $u'(\rho)$ at the singular divisors. Indeed, taking the $\rho$ derivative of \eqref{DivPot1} and \eqref{DivPot2} gives 
\begin{equation}\label{u'ab}
    a = \lim_{\rho \rightarrow - \infty} u'(\rho) < u'(\rho) < \lim_{\rho \rightarrow \infty} u'(\rho) = b.
\end{equation}
 In the standard holomorhpic coordinates $(x_i)_{i=1}^n$ on $\CC^n \backslash \{0\}$, metrics under the Calabi condition can be written locally as  
\begin{equation}
g_{i \bar{j}}=\partial_{i} \partial_{\bar{j}} u=e^{-\rho} u^{\prime}(\rho) \delta_{i j}+e^{-2 \rho} \bar{x}_{i} x_{j}\left(u^{\prime \prime}(\rho)-u^{\prime}(\rho)\right).
\end{equation}
The determinant of the matrix $g_{i \bar{j}}$ is given by 
\begin{equation}
\operatorname{det} g=e^{-n \rho}\left(u^{\prime}(\rho)\right)^{n-1} u^{\prime \prime}(\rho).
\end{equation}

Thus, the Ricci curvature $R_{i \bar{j}} = \p_i \bp_j \log \operatorname{det} g $ is locally given by 
\begin{equation}
R_{i \bar{j}}=e^{-\rho} v^{\prime}(\rho) \delta_{i j}+e^{-2 \rho} \bar{x}_{i} x_{j}\left(v^{\prime \prime}(\rho)-v^{\prime}(\rho)\right)
\end{equation}
where 
\begin{equation}
v = - \log \det g = n \rho-(n-1) \log u^{\prime}(\rho)-\log u^{\prime \prime}(\rho).
\end{equation}

Using the expressions above, we rewrite the continuity method \eqref{cont} in terms of $u$. Denoting the $t$ variable as a subscript, $u_t(\rho)$, we obtain 
\begin{equation}\label{MAu}
    u_t(\rho) = u_0(\rho) - t (n-1) \log(u_t'(\rho)) + t \log(u_t''(\rho)) -t n\rho + c_t,
\end{equation}
where $c_t$ is chosen to be
$$ c_t = - t(n-1)\log u_t'(0) - t \log u_t''(0)$$
to ensure $u_t(0) = 0$ for all $t \in [0,T)$. Taking derivatives with respect to $\rho$ yields
\begin{equation}\label{u1}
    u_t'(\rho) = {u_0}'(\rho) + t(n-1) \frac{u_t''(\rho)}{u_t'(\rho)} + t\frac{u_t'''(\rho)}{u_t''(\rho)}-tn,
\end{equation}
and
\begin{equation}\label{u2}
    u_t''(\rho) = u_0''(\rho) + t(n-1)\frac{u_t'''(\rho)}{u_t'(\rho)} - t(n-1) \frac{u_t''(\rho)^2}{u_t'(\rho)^2} - t \frac{u_t'''(\rho)^2}{u_t''(\rho)^2}  + t \frac{u_t^{(4)}(\rho)}{u_t''(\rho)}. 
\end{equation}
For clarity of notation, we may omit the dependence of $u_t(\rho)$ on $\rho$ and write $u_t = u_t(\rho)$. We also denote the derivatives in terms of $\rho$ with $u'_t$. 

The rotational invariance simplifies many calculations since we can apply a transformation to send any point on the manifold to coordinates $x = (x_1,0...,0)$. Under this coordinate,  the evolving metric, initial metric and Ricci curvatures take the form
\begin{equation}\label{gsimpU}
\left\{g_{i \bar{j}} \right\}=e^{-\rho} \operatorname{diag}\left\{u_t^{\prime \prime}, u_t^{\prime}, \ldots, u_t^{\prime}\right\},
\end{equation}
\begin{equation}\label{g0simpU}
\left\{g^0_{i \bar{j}}\right\}=e^{-\rho} \operatorname{diag}\left\{u_0^{\prime \prime}, u_0^{\prime}, \ldots, u_0^{\prime}\right\},
\end{equation}
and
\begin{equation}\label{RicsimpU}
R_{i \bar{j}}=\sqrt{-1} e^{-\rho} \operatorname{diag}\left\{v_t^{\prime \prime}, v_t^{\prime}, \ldots, v_t^{\prime}\right\}.
\end{equation}
respectively. 
The anticanonical line bundle is given by 
\begin{equation}
K_{M}^{-1}=2\left[D_{\infty}\right]-(k-n)\left[D_{H}\right]=\frac{(k+n)}{k}\left[D_{\infty}\right]+\frac{(k-n)}{k}\left[D_{0}\right].
\end{equation}
The singular time can be deduced from the cohomology as seen by \eqref{SingT}. The maximum time of existence is therefore given by 
\begin{equation}
T=\sup \left\{t \geqslant 0 \mid \alpha_{0}+t\left[K_{X}\right]>0\right\}.
\end{equation}
For our setup, the metric's class deforms according to
\begin{equation}\label{evol}
\alpha_{t}=\frac{b_{t}}{k}\left[D_{\infty}\right]-\frac{a_{t}}{k}\left[D_{0}\right]=\frac{b_{t}-a_{t}}{k}\left[D_{\infty}\right]+a_{t}\left[D_{H}\right],
\end{equation}
where 
\begin{equation}
b_t=b_0-(k+n) t \quad \text { and } \quad a_t=a_0+(k-n) t.
\end{equation}
Finally we pick a reference for the K\"ahler class $\alpha_t$. Let 
\begin{equation}\label{uref}
\hat{u}_t(\rho)= a_t \rho+\frac{(b_t-a_t)}{k} \log \left(e^{k \rho}+1\right).
\end{equation}
Since $\p \bp \rho = \pi^* \omega_{FS} := \chi \in [D_H]$ and  $ 2 \p \bp \log \left(e^{k \rho}+1\right) \in 2[D_\infty] $, we see that the associated form $\hat{\omega}$ lies in the desired class. 
The evolution of the cohomology class exhibits three distinct behaviours.  
\begin{enumerate}
    \item \textit{Case I}: The coefficient of $[D_\infty]$ degenerates whilst $[D_H]$ remains positive. This occurs when $k \geqslant n$ or when $1 \leqslant k \leqslant n-1$ and $a_{0}(n+k)>b_{0}(n-k)$. Indeed, if the class $\alpha_t$ fails to be K\"ahler due to the coefficient of $[D_{\infty}]$ vanishing, that is  
     $$\frac{b_t-a_t}{2k} =  \frac{b_0 - a_0}{2k} -t = 0, $$
     then 
        \begin{equation}\label{IT}
        T=\frac{b_{0}-a_{0}}{2 k}.
        \end{equation}
    If $k \geqslant n$, then $a_t > a_0 >0$, but if $1 \leqslant k \leqslant n-1$ , then this uniform bound away from $0$ only holds when 
    \begin{equation}\label{0sing}
         a_T = \frac{a_0(n+k) - b_0(n-k)}{2k}>0,
    \end{equation}
    that is when with the additional condition $a_{0}(n+k)>b_{0}(n-k)$ holds. The Cohmology class therefore predicts that the continuity method contracts fibers and the resulting limiting space should be $\CC \PP^{n-1}$.
    \item \textit{Case II}: Both coefficients in $[D_\infty]$ and $[D_H]$ approach $0$ as the same singular time $T$ which occurs when $1 \leqslant k \leqslant n-1$ with $a_{0}(n+k)=b_{0}(n-k)$. As a consequence, \eqref{0sing}  yields $a_T=0$, and the K\"ahler class is proportional to the first Chern class. 
        \begin{equation}
        \alpha_t=\left(\frac{a_0}{n-k}-t\right) c_1(M).
        \end{equation}
        The singular time would then be 
        $$T =  \frac{a_0}{n-k}$$
        where we expect a full volume collapsed singularity to form. 
    \item \textit{Case III}: The coefficient of $[D_\infty]$ remains uniformly bounded from 0 and instead the coefficient of $[D_H]$ degenerates. This occurs when $1 \leqslant k \leqslant n-1$ with  $a_{0}(n+k) < b_{0}(n-k)$. The coefficient, $a_t$, of $[D_H]$  is zero when 
            \begin{equation}
                T=\frac{a_{0}}{n-k}.
            \end{equation}
    The condition for $k$ and $n$ in this case is obtained from imposing $(b_t-a_t)/2k >0$.       
\end{enumerate}

The three cases discussed above can be visualised as in Figure \ref{fig}. 

\begin{center}
\begin{figure}[ht]
\tikzset{every picture/.style={line width=0.75pt}}\label{fig} 

\begin{tikzpicture}[x=0.75pt,y=0.75pt,yscale=-1,xscale=1]

\draw  [dash pattern={on 4.5pt off 4.5pt}]  (48,47.8) -- (47,249.45) ;
\draw  [dash pattern={on 4.5pt off 4.5pt}]  (283,249.23) -- (47,249.45) ;
\draw  [dash pattern={on 4.5pt off 4.5pt}] (271,241) -- (283,249.4) -- (271,257.8) ;
\draw  [dash pattern={on 4.5pt off 4.5pt}] (39.74,58.59) -- (47.86,46.4) -- (56.53,58.21) ;
\draw    (136.5,62.83) -- (47.5,156.63) ;
\draw [shift={(88.56,113.35)}, rotate = 313.5] [fill={rgb, 255:red, 0; green, 0; blue, 0 }  ][line width=0.08]  [draw opacity=0] (8.93,-4.29) -- (0,0) -- (8.93,4.29) -- cycle    ;
\draw    (241.5,141) -- (142,248.8) ;
\draw [shift={(188.36,198.57)}, rotate = 312.71] [fill={rgb, 255:red, 0; green, 0; blue, 0 }  ][line width=0.08]  [draw opacity=0] (8.93,-4.29) -- (0,0) -- (8.93,4.29) -- cycle    ;
\draw    (187.5,100) -- (47,249.45) ;
\draw [shift={(113.83,178.37)}, rotate = 313.23] [fill={rgb, 255:red, 0; green, 0; blue, 0 }  ][line width=0.08]  [draw opacity=0] (8.93,-4.29) -- (0,0) -- (8.93,4.29) -- cycle    ;
\draw  [fill={rgb, 255:red, 0; green, 0; blue, 0 }  ,fill opacity=1 ] (178,106.5) .. controls (178,104.57) and (179.68,103) .. (181.75,103) .. controls (183.82,103) and (185.5,104.57) .. (185.5,106.5) .. controls (185.5,108.43) and (183.82,110) .. (181.75,110) .. controls (179.68,110) and (178,108.43) .. (178,106.5) -- cycle ;

\draw (295,240.4) node [anchor=north west][inner sep=0.75pt]    {$[ D_{H}]$};
\draw (29,18.4) node [anchor=north west][inner sep=0.75pt]    {$[ D_{\infty }]$};
\draw (155,78.4) node [anchor=north west][inner sep=0.75pt]    {$C_{1}( X)$};
\end{tikzpicture}

\caption[Figure 1]{Three distinct evolution of the Cohomology class of $\omega(t)$ under \eqref{cont}.}
\end{figure}
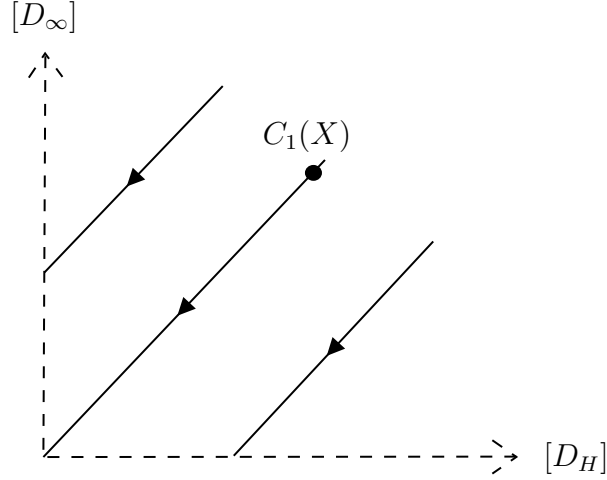
\end{center}
        
\section{Continuity Method on $X_{n,k}$}

\subsection{Analysis for Case I}\label{S3}
We  rewrite the reference metric as 
\begin{equation}\label{Iref}
\hat{\omega}_{t}=a_{t} \chi+\frac{\left(b_{t}-a_{t}\right)}{2 k} \theta=a_{t} \chi+(T-t) \theta \in \alpha_{t}.
\end{equation}
Moreover, we observe that $\theta-(k-n) \chi \in C_1(X_{n,k})$, thus we choose a form $\Omega$ such that 
\begin{equation}
\frac{\sqrt{-1}}{2 \pi} \partial \bar{\partial} \log \Omega=-\theta+(k-n) \chi.
\end{equation}
Reducing to a Monge-Ampere equation in the usual way, we obtain 
\begin{equation}\label{HMAcaseI}
    \begin{cases}
        \varphi(t) = t \log \left(\frac{\omega^n}{(T-t)\Omega}\right), \\
        \varphi(0) = 0.
    \end{cases}
\end{equation}
In Case I, the parameter $a_t$ remains uniformly bounded away from zero. Using this property, we obtain the following lemmas.
\begin{lem}\label{Ilemphi}
Let $\varphi(t)$ be a solution to \eqref{HMAcaseI}. Then there exists $C>0$ such that for all $t \in [0,T)$, 
\begin{equation}\label{Ilemphiphi}
|\varphi(t)|_{L^{\infty}(X)} \leqslant C, 
\end{equation}
and 
\begin{equation}\label{Ilemphivol}
     C^{-1} (T-t) \Omega \leqslant \omega^n \leqslant C (T-t) \Omega.
\end{equation}
\end{lem}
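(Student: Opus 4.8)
The plan is to use the standard Aubin--Yau type estimates adapted to the continuity method, exploiting that in Case~I the cohomology class $\alpha_t$ converges to the non-degenerate limit class $a_T\chi$, so $(T-t)^{-1}\hat\omega_t$ stays uniformly equivalent to a fixed metric and, crucially, $\Omega$ is a fixed smooth volume form independent of $t$ up to a bounded factor. First I would rewrite \eqref{HMAcaseI} in terms of the relative potential $\psi(t)=\varphi(t)$ measured against the reference $\hat\omega_t$ from \eqref{Iref}, so that $\omega=\hat\omega_t+\sqrt{-1}\,\partial\bar\partial\varphi$ (after absorbing the logarithmic correction into $\Omega$), giving a complex Monge--Amp\`ere equation $(\hat\omega_t+\sqrt{-1}\,\partial\bar\partial\varphi)^n=(T-t)\,e^{\varphi/t}\,\Omega$. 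The second estimate \eqref{Ilemphivol} is then just the exponential of \eqref{Ilemphiphi} together with the defining equation, so the whole lemma reduces to the $L^\infty$ bound on $\varphi$.

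For the upper bound on $\varphi$, I would apply the maximum principle at a point where $\varphi(\cdot,t)$ attains its spatial maximum: there $\sqrt{-1}\,\partial\bar\partial\varphi\le 0$, hence $\omega^n\le\hat\omega_t^n$, so $\varphi=t\log\!\big(\omega^n/((T-t)\Omega)\big)\le t\log\!\big(\hat\omega_t^n/((T-t)\Omega)\big)$. Since $\hat\omega_t=a_t\chi+(T-t)\theta$ with $a_t$ bounded above and below away from $0$, one has $\hat\omega_t^n\le C(T-t)\,\chi^{n-1}\wedge\theta + \dots$; the key point is that $\chi$ has rank $n-1$ (it is pulled back from $\CC\PP^{n-1}$), so $\chi^n=0$ and the top-degree form $\hat\omega_t^n$ is $O(T-t)$ uniformly, giving $\hat\omega_t^n/((T-t)\Omega)\le C$ and hence $\varphi\le t\log C\le C'$ since $t\le T$ is bounded. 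For the lower bound, at a spatial minimum $\sqrt{-1}\,\partial\bar\partial\varphi\ge 0$ so $\omega^n\ge\hat\omega_t^n$, and I need a matching lower bound $\hat\omega_t^n\ge c(T-t)\Omega$; this again follows because $\hat\omega_t^n=n\,a_t^{n-1}(T-t)\,\chi^{n-1}\wedge\theta+O((T-t)^2)$ and $\chi^{n-1}\wedge\theta$ is a strictly positive multiple of $\Omega$ away from where $\theta$ degenerates—one must check $\theta$ is a genuine K\"ahler form (it is, being in $2[D_\infty]$ and chosen as the curvature of a metric on $\mathcal O(2D_\infty)$), so $\chi^{n-1}\wedge\theta>0$ everywhere. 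Thus $\varphi\ge t\log c\ge -C''$.

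The main obstacle is making the comparison $c(T-t)\Omega\le\hat\omega_t^n\le C(T-t)\Omega$ genuinely uniform in $t$ as $t\to T$: one has to be careful that the $O((T-t)^2)$ error in expanding $\hat\omega_t^n=\big(a_t\chi+(T-t)\theta\big)^n$ does not spoil either inequality, and that the positivity constant for $\chi^{n-1}\wedge\theta/\Omega$ is bounded below on all of $X$ (including on $D_0$ and $D_\infty$, where $\chi$ or the fibre directions behave differently). Concretely I would verify the pointwise inequalities in the Calabi coordinates using \eqref{gsimpU}: write $\hat u_t$ from \eqref{uref}, compute $\hat u_t'$ and $\hat u_t''$ explicitly, and check $e^{-n\rho}(\hat u_t')^{n-1}\hat u_t''$ is comparable to $(T-t)\,\Omega$ as densities, uniformly in $\rho\in(-\infty,\infty)$ and $t\in[0,T)$. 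Once this pointwise volume comparison for the reference metric is in hand, the maximum-principle argument above closes immediately, and \eqref{Ilemphivol} follows by substituting \eqref{Ilemphiphi} back into \eqref{HMAcaseI}.
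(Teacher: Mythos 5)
Your proposal is correct, and for the upper bound and for the equivalence of \eqref{Ilemphiphi} with \eqref{Ilemphivol} it coincides with the paper's argument (the key point in both is $\chi^n=0$, so $\hat\omega_t^n=\sum_{j=0}^{n-1}\binom{n}{j}a_t^{j}(T-t)^{n-j}\chi^{j}\wedge\theta^{n-j}$ is comparable to $(T-t)\Omega$ uniformly because $a_t$ stays bounded away from $0$). Where you genuinely diverge is the lower bound on $\varphi$: the paper quotes the pluripotential-theoretic $L^\infty$ estimates of \cites{DP10,EGZ08} for degenerate complex Monge--Amp\`ere equations, whereas you run the elementary minimum principle, using that at a spatial minimum $\sqrt{-1}\,\partial\bar{\partial}\varphi\geqslant 0$ forces $\omega^n\geqslant\hat\omega_t^n\geqslant C^{-1}(T-t)\Omega$. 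This is legitimate here: $\hat\omega_t$ is an honest K\"ahler form for every $t<T$ (Calabi's lemma applies to $\hat u_t$ since $0<a_t<b_t$), and the worry you raise about the $O((T-t)^2)$ terms is vacuous for the lower bound because every term in the binomial expansion is a non-negative $(n,n)$-form, so the single term $n\,a_t^{n-1}(T-t)\chi^{n-1}\wedge\theta$ already suffices, while for the upper bound the remaining terms are higher order in $(T-t)$. Your route is therefore more self-contained; the paper's route is the one that survives in settings where the reference family degenerates badly enough to destroy the uniform lower volume bound. One small correction: $\theta$ is \emph{not} a K\"ahler form --- $[D_\infty]$ lies on the boundary of the K\"ahler cone, and the representative with potential $2\log(e^{k\rho}+1)$ is only semi-positive, degenerating in the base directions along $D_0$. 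What your argument actually needs, and what the Calabi-coordinate check you propose delivers, is that $\theta$ is strictly positive in the fibre direction everywhere, so that $\chi^{n-1}\wedge\theta$ is a smooth, everywhere strictly positive volume form comparable to $\Omega$, including across $D_0$ and $D_\infty$.
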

\begin{proof}
The form $\chi$ is a pullback of a $(n-1,n-1)$ form, hence $\chi^n=0$. Expanding out the volume form using \eqref{Iref} gives
$$\hat{\omega}_t^n = \sum_{k=0}^{n-1} a_t^k (T-t)^{n-k} \chi^k \wedge \theta^{n-k}. $$
Since $a_t$ is uniformly bounded away from 0, then for some $C>0$ independent of $t$ such that 
\begin{equation}
    C^{-1}(T-t)\Omega \leqslant \hat{\omega}_t^n \leqslant C (T-t) \Omega.
\end{equation}
Applying the maximum principle to \eqref{HMAcaseI} yields a uniform upper bound for $\varphi(t)$. The lower bound follows from the pluripotential theory of degenerate complex Monge-Amp\`{e}re equation (\cites{DP10,EGZ08}). It can be observed from \eqref{HMAcaseI} that \eqref{Ilemphiphi} and \eqref{Ilemphivol} are equivalent. 
\end{proof}
In addition, the well known Schwartz lemma holds. 
\begin{lem}
Let $\omega(t)$ be a solution to \eqref{cont}. There exists $C>0$ such that 
\begin{equation}\label{Schw}
    \operatorname{tr}_{\omega} \chi \leqslant C,
\end{equation}
for all $t \in [0,T)$. 
\end{lem}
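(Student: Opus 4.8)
The plan is to run the standard Aubin–Yau second-order estimate adapted to the continuity method, using the fact that $\chi = \pi^*\omega_{FS}$ has semi-positive curvature bounded above (it is the pullback of a fixed metric from $\mathbb{C}\mathbb{P}^{n-1}$), and that $\operatorname{Ric}(\omega) \geqslant -C\omega$ along the flow. First I would recall that, differentiating $\omega(t) = \omega_0 - t\operatorname{Ric}(\omega(t))$, the metric satisfies the lower Ricci bound $\operatorname{Ric}(\omega(t)) = \tfrac{1}{t}(\omega_0 - \omega(t)) \geqslant -\tfrac{1}{t}\omega(t)$; more usefully, since $\omega(t) \leqslant \omega_0$ fails in general one uses instead that $\operatorname{Ric}(\omega) \geqslant -C_0 \chi - C_0 \theta \geqslant -C_0\omega$ for a fixed constant, using that $[\omega_0] - tc_1(X)$ stays in a bounded region of the Kähler cone and $a_t$ is bounded away from $0$ in Case I so $\omega(t)$ is uniformly comparable in class to $\chi + \theta$. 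Then I would compute $\Delta_\omega \log \operatorname{tr}_\omega \chi$ by the Chern–Lu / Aubin formula: for a holomorphic map (here the identity, viewed through $\pi$) between Kähler manifolds,
\begin{equation}
\Delta_\omega \log \operatorname{tr}_\omega \chi \;\geqslant\; \frac{-\,\operatorname{tr}_\omega\!\big(\pi^*\operatorname{Ric}(\omega_{FS})\big)}{\operatorname{tr}_\omega \chi} \;-\; B\,(\operatorname{tr}_\omega \chi),
\end{equation}
where $B$ is an upper bound for the bisectional curvature of $(\mathbb{C}\mathbb{P}^{n-1}, \omega_{FS})$; since $\operatorname{Ric}(\omega_{FS}) = n\,\omega_{FS}$ this gives $\Delta_\omega \log \operatorname{tr}_\omega\chi \geqslant -n - B\operatorname{tr}_\omega\chi$.

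Next I would combine this with the evolution/constraint equation. Taking $\operatorname{tr}_\omega$ of $\omega = \omega_0 - t\operatorname{Ric}(\omega)$ gives $n = \operatorname{tr}_\omega\omega_0 - t\,\operatorname{tr}_\omega\operatorname{Ric}(\omega)$, and $\operatorname{tr}_\omega\operatorname{Ric}(\omega) = R$ (scalar curvature), which together with the local Calabi-symmetry expressions is not directly sign-definite; so instead I would consider the quantity $Q = \log\operatorname{tr}_\omega\chi - A\varphi$ where $\varphi$ is the Monge–Ampère potential from \eqref{HMAcaseI} and $A > 0$ is a large constant to be chosen. Using $\Delta_\omega \varphi = \operatorname{tr}_\omega\omega - \operatorname{tr}_\omega\hat\omega_t = n - \operatorname{tr}_\omega\hat\omega_t$ and $\hat\omega_t = a_t\chi + (T-t)\theta \geqslant a_T\chi$ with $a_T > 0$, one gets $\Delta_\omega\varphi \leqslant n - a_T \operatorname{tr}_\omega\chi$. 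Hence at an interior maximum point $p$ of $Q$ (which exists since $X$ is compact and everything is smooth for $t < T$),
\begin{equation}
0 \;\geqslant\; \Delta_\omega Q \;\geqslant\; -n - B\operatorname{tr}_\omega\chi \;-\; A\big(n - a_T\operatorname{tr}_\omega\chi\big) \;=\; (Aa_T - B)\operatorname{tr}_\omega\chi \;-\; n(1+A).
\end{equation}
Choosing $A = (B+1)/a_T$ makes the coefficient of $\operatorname{tr}_\omega\chi$ at least $1$, so $\operatorname{tr}_\omega\chi(p) \leqslant n(1+A)$, and since $\varphi$ is uniformly bounded by Lemma \ref{Ilemphi}, $Q \leqslant Q(p) \leqslant C$ forces $\operatorname{tr}_\omega\chi \leqslant C'$ everywhere, uniformly in $t$.

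The main obstacle I anticipate is making the curvature-of-$\omega_{FS}$ term and the lower Ricci bound enter with the right sign independently of $t$: one must be careful that the Chern–Lu inequality is applied with $\chi$ semi-positive but \emph{degenerate} (rank $n-1$, since $\chi^n = 0$), so $\operatorname{tr}_\omega\chi$ can a priori be large where fibers collapse — it is precisely here that the uniform positivity $a_T > 0$ of the $[D_H]$-coefficient in Case I is essential, and the argument genuinely uses that we are not in Case III. A secondary technical point is justifying the maximum principle: for each fixed $t < T$ the metric $\omega(t)$ is smooth and the computation is a genuine elliptic (not parabolic) one at that frozen time, so no parabolic maximum principle subtleties arise, but one should note the bound is then automatically uniform in $t$ because all constants ($B$, $a_T$, $\sup|\varphi|$, the comparison of $\omega$ with $\chi+\theta$ in cohomology) are $t$-independent on $[0,T)$.
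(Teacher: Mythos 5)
Your proof is correct and follows essentially the same route as the paper: the Chern--Lu/Aubin second-order inequality for $\log \operatorname{tr}_{\omega}\chi$, combined with a large multiple of the uniformly bounded potential $\varphi$ and the maximum principle, using that $a_t$ stays uniformly positive in Case I. One small caveat: the first term in your Chern--Lu display should involve $\operatorname{Ric}(\omega)$ (the domain metric) contracted against $\chi$ rather than $\pi^*\operatorname{Ric}(\omega_{FS})$ --- this is exactly where the lower Ricci bound $\operatorname{Ric}(\omega)=\tfrac{1}{t}(\omega_0-\omega)\geqslant -\tfrac{1}{t}\omega$ enters, producing the paper's $\tfrac{1}{t}\tfrac{\operatorname{tr}_{\omega_0}\chi}{\operatorname{tr}_{\omega}\chi}-\tfrac{1}{t}$ terms --- but the resulting inequality $\Delta_\omega \log\operatorname{tr}_\omega\chi \geqslant -C - B\operatorname{tr}_\omega\chi$ (for $t$ bounded away from $0$, the range $t$ small being handled by smoothness) is still the right one, and the rest of your argument goes through.
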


\begin{proof}
These estimates are common and utilise a maximum principle argument. By a standard identity and rewriting any Ricci terms for $\omega = \omega(t)$ using \eqref{cont}, we obtain
\begin{equation}
    \Delta_{\omega} \log \tr_{\omega} \chi \geqslant - C_{\chi} \tr_{\omega} \chi + \frac{1}{t}\frac{\tr_{\omega_0} \chi }{\tr_{\omega}\chi} - \frac{1}{t}.
\end{equation}
Furthermore 
\begin{equation}
    \Delta_{\omega} \varphi = n - a_t \tr_{\omega}\chi + (T-t)\tr_{\omega}\theta.
\end{equation}
Combining the two lines above, we choose $A>0$ large enough so that there exists a $C = C(A)$ such that  
\begin{equation}
     \Delta_{\omega} \left( \log \tr_{\omega} \chi + A \varphi \right) \geqslant \tr_{\omega}\chi -C.
\end{equation}
Then we apply the maximum principle and since $\varphi$ is uniformly bounded by Lemma \ref{Ilemphi}, we conclude \eqref{Schw}. 
\end{proof}

\begin{lem}\label{Iulem}
The function $u_t= u_t(\rho)$ satisfying \eqref{MAu} under the range of $n,k$ in Case I satisfies:
\begin{enumerate}
    \item  The derivative $u_t'(\rho)$ remains uniformly bounded away form $0$ and is controlled from above. More precisely 
    \begin{equation}\label{Iulem1}
        a_0 < u_t^{\prime}(\rho)<a_t + 2 k(T-t)
    \end{equation}
    for all $ t \in[0, T)$. 
    \item Limiting behaviour of $u_t(\rho)$ approaching the singular time,
    \begin{equation}
        \lim _{t \rightarrow T}\left(u_t(\rho)-a_{T} \rho\right)=0. 
    \end{equation}
    \item There exist $C = C(\omega_0)$ such that 
        \begin{equation}\label{Iulem3}
        C^{-1} \frac{e^{k \rho}}{(1+e^{k\rho})^2}(T-t) <u_t^{\prime \prime}(\rho) \leqslant C \frac{e^{k \rho}}{(1+e^{k\rho})^2}(T-t)
        \end{equation}
        for all $t \in [0,T)$. 
    \item There exist $C = C(\omega_0)$ such that 
        \begin{equation}\label{Iulem4}
        \frac{\left|u_t^{\prime \prime \prime}(\rho)\right|}{u_t''(\rho)} \leqslant C
        \end{equation}
        for all $t \in [0,T)$. 
\end{enumerate}
\end{lem}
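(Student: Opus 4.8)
The plan is to establish the four estimates in sequence, using the ODE-like relations \eqref{u1}, \eqref{u2} together with the maximum principle (applied in the $\rho$ variable, treating $t$ as a parameter), bootstrapping from the cohomological/volume information already obtained in Lemma \ref{Ilemphi}. For item (1), the lower bound $u_t'(\rho)>a_0$ should follow from the fact that $u_t'$ interpolates between its boundary values $a_t$ and $b_t$ (see \eqref{u'ab}), and $a_t = a_0 + (k-n)t$ together with the Case I hypothesis keeps $a_t$ — and hence the infimum of $u_t'$ — bounded below; more carefully, one differentiates \eqref{MAu} and runs a maximum principle on $u_t'$ using that the right-hand side of \eqref{u1} is monotone in the relevant quantities, or one simply notes $u_t'(\rho) = a_t + \frac{b_t-a_t}{k}\cdot\frac{k e^{k\rho}}{e^{k\rho}+1} \cdot(\text{correction})$ is not literally the reference, so instead: use that $\partial_t u_t'$ has a sign at an interior extremum of $u_t'-\hat u_t'$. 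The upper bound $u_t'(\rho) < a_t + 2k(T-t) = b_t/1 \cdot(\ldots)$, i.e. essentially $u_t' < b_t$, should come the same way from the boundary behaviour at $\rho \to \infty$. Item (2) is then a consequence of (1): since $a_0 < u_t'(\rho) < a_t + 2k(T-t) \to a_T$, the derivative $u_t'$ is squeezed to $a_T$ as $t\to T$, and integrating from $\rho=0$ (where $u_t(0)=0$) gives $u_t(\rho) - a_T\rho \to 0$, uniformly on compact sets in $\rho$ (and in fact globally using the divisor potential structure).

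For item (3), the natural route is to compare $u_t''$ with the model quantity $\hat u_t''(\rho)$ coming from the reference potential \eqref{uref}, for which $\hat u_t''(\rho) = (b_t-a_t)\cdot \frac{k e^{k\rho}}{(e^{k\rho}+1)^2} = 2k(T-t)\cdot\frac{k e^{k\rho}}{(e^{k\rho}+1)^2}$, giving exactly the claimed profile $\frac{e^{k\rho}}{(1+e^{k\rho})^2}(T-t)$ up to constants. The inequalities \eqref{Iulem3} are then equivalent to two-sided bounds on $u_t''/\hat u_t''$, and since $\det g = e^{-n\rho}(u_t')^{n-1}u_t''$ while $\det \hat g = e^{-n\rho}(\hat u_t')^{n-1}\hat u_t''$, the volume ratio estimate \eqref{Ilemphivol} of Lemma \ref{Ilemphi} combined with the already-established two-sided bounds on $u_t'$ from item (1) (and the corresponding bounds on $\hat u_t'$) pins down $u_t''/\hat u_t''$ up to a uniform constant. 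One must also invoke the Schwarz lemma bound \eqref{Schw}, $\operatorname{tr}_\omega\chi \le C$, which in these coordinates reads $e^{-\rho}\big(\tfrac{1}{u_t''}\cdot 0 + \tfrac{n-1}{u_t'}\big)\cdot(\ldots)$ — more precisely it controls $u_t'$ from below by a multiple of the corresponding quantity for $\chi$, reinforcing item (1); the genuinely new content for the $u_t''$ lower bound near $\rho = \pm\infty$ may instead need a separate barrier argument at the divisors using the smoothness requirement \eqref{DivPot1}–\eqref{DivPot2}.

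Finally, item (4) — the bound $|u_t'''|/u_t'' \le C$ — is the one I expect to be the main obstacle, since it is a genuinely higher-order (third derivative) estimate and is not controlled by volume/cohomology data alone. The plan is to differentiate the scalar equation once more to get a parabolic-type (here elliptic-in-$\rho$) inequality for the quantity $Q_t := u_t'''/u_t''$ (equivalently $(\log u_t'')'$), using \eqref{u2}, and then apply the maximum principle: at an interior maximum of $Q_t$ one has $Q_t' = 0$ and $Q_t'' \le 0$, which should force $Q_t$ to be bounded by data involving $u_0$ and the already-controlled lower-order quantities. The subtlety is controlling the boundary terms as $\rho \to \pm\infty$ (i.e. at $D_0$ and $D_\infty$), where one uses the divisor potentials: from \eqref{DivPot1}, $u_t(\rho) - a\rho = u_{[D_0],t}(e^{k\rho})$ is smooth up to $e^{k\rho}=0$, which translates into precise decay of $u_t''$ and $u_t'''$ near $\rho=-\infty$ (and symmetrically near $+\infty$), showing $Q_t$ stays bounded there. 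An alternative, probably cleaner, approach is to observe that item (3) already gives $u_t'' \asymp (T-t)\,\frac{e^{k\rho}}{(1+e^{k\rho})^2}$, i.e. $\log u_t'' = \log(T-t) + k\rho - 2\log(1+e^{k\rho}) + O(1)$; if one can upgrade \eqref{Iulem3} to a $C^1$-estimate in $\rho$ (differentiating the equation for $\log u_t''$ and using Schauder-type interior estimates in the single variable $\rho$), then \eqref{Iulem4} follows since $(\log u_t'')' = k - \frac{2ke^{k\rho}}{1+e^{k\rho}} + (\text{bounded})$. Either way the crux is a maximum-principle estimate for a third-order quantity with carefully handled behaviour at the two singular divisors.
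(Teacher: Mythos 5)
Your treatment of items (1)--(3) is essentially the paper's argument: (1) is the structural fact \eqref{u'ab} that $u_t'$ increases from $a_t$ to $b_t$ together with $b_t = a_t + 2k(T-t)$ and the Case I lower bound on $a_t$ (no maximum principle on $u_t'$ is needed, and the Schwarz lemma plays no role here); (2) is the integration from $\rho=0$ using the normalisation $u_t(0)=0$; and (3) is exactly the comparison of $\det g = e^{-n\rho}(u_t')^{n-1}u_t''$ with $\det\hat g_0$ via the volume estimate \eqref{Ilemphivol}, using the two-sided control on $u_t'$ from (1) — the extra barrier argument at the divisors you anticipate is not required, since the volume comparison already holds globally.

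The genuine gap is item (4). You flag it as ``the main obstacle'' and propose either a maximum-principle argument for $Q_t = u_t'''/u_t''$ obtained by differentiating the equation a second time, or a Schauder-type upgrade of (3); you do not carry out either, and both are far harder than what is needed. The point you are missing is that the \emph{once}-differentiated scalar equation \eqref{u1} already contains $u_t'''/u_t''$ as one of its terms, so it can simply be solved for algebraically:
\begin{equation*}
\frac{u_t'''}{u_t''} \;=\; \frac{u_t' - u_0'}{t} \;-\; (n-1)\,\frac{u_t''}{u_t'} \;+\; n .
\end{equation*}
Every quantity on the right is already controlled: $u_t'$ and $u_0'$ lie between $a_0$ and $b_0$ by (1), and $u_t''/u_t' \leqslant C$ by (3) together with the lower bound $u_t' \geqslant a_0$. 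Hence $|u_t'''|/u_t''\leqslant C$ with no further PDE argument, no second differentiation, and no boundary analysis at $D_0$ and $D_\infty$. As written, your proposal leaves (4) unproven — you acknowledge the crux but only sketch a plan whose boundary-term analysis you have not justified — so you should replace that discussion with the one-line algebraic rearrangement of \eqref{u1}.
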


\begin{proof}
The proofs for (1), (2) and (3) can be readily adapted from \cite{SW11}, we include them here for completeness. From \eqref{u'ab} and the convexity of $u_t'$, we have 
$$ a_t \leqslant u_t' \leqslant b_t.$$
Since $k \geqslant n$, $a_t >a_0>0$, that is $u'_t$ remains uniformly bounded away from $0$. Using \eqref{IT}, we rewrite $b_t$ in terms of $a_t$;
$$ b_t = a_t + 2k(T-t),$$
to obtain the estimate in (1). To obtain (2), we recall that $u_t(0)=0$ for all $t \in [0,T)$, and so 
$$
u_t(\rho)-a_t \rho=\int_0^\rho\left(u_t^{\prime}(s)-a_t\right) d s.
$$
We then apply (i) which yields 
$$
\left|u_t(\rho)-a_t \rho\right| \leqslant 2 k(T-t)|\rho| \longrightarrow 0
$$
as $t \rightarrow T$.

For (3), we note that Lemma \ref{Ilemphi} implies
\begin{equation}
\operatorname{det} g(t) \leqslant C(T-t) \operatorname{det} \hat{g}_0,
\end{equation}
where $\hat{g}_0$ is the reference metric with associated potential $\hat{u}_0$. Thus, we estimate 
$$
\begin{aligned}
\operatorname{det} \hat{g}_0 &=e^{-n \rho}\left(\hat{u}_0^{\prime}\right)^{n-1} \hat{u}_0^{\prime \prime} \\
&=k\left(b_0-a_0\right) e^{-n \rho}\left(a_0+\left(b_0-a_0\right) \frac{e^{k \rho}}{1+e^{k \rho}}\right)^{n-1} \frac{e^{k \rho}}{\left(1+e^{k \rho}\right)^2} \\
& \leqslant C e^{-n \rho},
\end{aligned}
$$
where the second equality is obtained from \eqref{uref}. Moreover we have 
$$
\operatorname{det} g =e^{-n \rho}\left(u_t^{\prime}\right)^{n-1} u_t^{\prime \prime}.
$$
Combining the above with the previous two inequalities yield
\begin{equation}
\left(u_t^{\prime}\right)^{n-1} u_t^{\prime \prime}\leqslant C \frac{e^{k \rho}}{(1+e^{k \rho})^2}  (T-t).
\end{equation}
Since $u_t \geqslant a_t$ which is bounded uniformly away from 0, we obtain \eqref{Iulem3}. 
Similarly, using the lower bound in \eqref{Iulem3}, 
$$ 
\det(g) \geqslant C^{-1}(T-t)\det \hat{g}_0 \geqslant C^{-1}\frac{e^{k \rho}}{(1+e^{k\rho})^2}(T-t) . 
$$

To obtain the two estimates in (4), we first rewrite \eqref{u1} as 
$$ \frac{u_t'''}{u_t''} = \frac{u_t'-u_0'}{t} - (n-1) \frac{u_t''}{u_t'} +n.$$
Since $u_t'' >0$, it follows from \eqref{Iulem1} and \eqref{Iulem3} that there exists $C>0$ uniform of $t$ such that 
\begin{equation}\label{Iu'''/u''}
    \frac{|u_t'''|}{u_t''} \leqslant C. 
\end{equation}
\end{proof}

As a consequence we can recover the same results in \cite{SW11} for the continuity method. 
\begin{thm}\label{Imainthma}
We have
\begin{enumerate}
    \item $\omega(t) \geqslant a_{t} \chi.$
    \item  $\sup _{M} \operatorname{tr}_{\hat{g}_{0}} g \leqslant C$.
    \item  For any compact set $K \subset M \backslash\left(D_{\infty} \cup D_{0}\right)$,
        $$\sup _{K}\left|\nabla_{\hat{g}_{0}} g\right|_{\hat{g}_{0}} \leqslant C_{K} $$
\end{enumerate}
\end{thm}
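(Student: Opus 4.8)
The plan is to reduce all three statements to the explicit $\rho$-estimates of Lemma~\ref{Iulem}, working in the simplified coordinates of \eqref{gsimpU}--\eqref{g0simpU}: at any given point we rotate so that $x=(x_1,0,\dots,0)$, where $\sum_i|x_i|^2=e^\rho$. In these coordinates $\chi=\p\bp\rho$ has components $\chi_{i\bar j}=e^{-\rho}\delta_{ij}-e^{-2\rho}\bar x_i x_j$, which reduces to $e^{-\rho}\diag\{0,1,\dots,1\}$; hence, using \eqref{gsimpU},
\[
\omega(t)-a_t\chi=e^{-\rho}\diag\{u_t'',\, u_t'-a_t,\, \dots,\, u_t'-a_t\}.
\]
This matrix is nonnegative because $u_t''>0$ by the Calabi Lemma and $u_t'(\rho)>a_t=\lim_{s\to-\infty}u_t'(s)$ by \eqref{u'ab} (equivalently, by the lower bound in \eqref{Iulem1}), which proves (1).

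For (2), take $\hat g_0$ to be the reference metric with potential $\hat u_0$ from \eqref{uref}, which also obeys the Calabi ansatz with $\hat u_0'=a_0+(b_0-a_0)\frac{e^{k\rho}}{1+e^{k\rho}}$ and $\hat u_0''=k(b_0-a_0)\frac{e^{k\rho}}{(1+e^{k\rho})^2}$. Then
\[
\operatorname{tr}_{\hat g_0} g=\frac{u_t''}{\hat u_0''}+(n-1)\frac{u_t'}{\hat u_0'},
\]
and the second term is at most $(n-1)b_0/a_0$ by \eqref{Iulem1} together with $\hat u_0'\geqslant a_0$, while the first is at most $\frac{C(T-t)}{k(b_0-a_0)}\leqslant\frac{CT}{k(b_0-a_0)}$ by the upper bound in \eqref{Iulem3}. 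The key point is that the degeneration profile $e^{k\rho}/(1+e^{k\rho})^2$ in \eqref{Iulem3} is exactly the one appearing in $\hat u_0''$, which is what makes the bound uniform up to $D_0$ and $D_\infty$.

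For (3), a compact $K\subset M\setminus(D_0\cup D_\infty)$ is contained in a slab $\{|\rho|\leqslant L\}$, on which $e^{k\rho}/(1+e^{k\rho})^2$ and the components of $\hat g_0$ and $\hat g_0^{-1}$ are bounded above and below by positive constants depending only on $L$. I would write $\nabla_{\hat g_0} g=\p g-\Gamma(\hat g_0)\ast g$, where $\Gamma(\hat g_0)$ is the fixed, $t$-independent Christoffel symbol of $\hat g_0$, smooth on $K$. Expanding $\p_m g_{i\bar j}$ from $g_{i\bar j}=e^{-\rho}u_t'\delta_{ij}+e^{-2\rho}\bar x_i x_j(u_t''-u_t')$ and using $\p_m\rho=e^{-\rho}\bar x_m$, one sees that only $u_t'$, $u_t''$, and $u_t'''$ occur, multiplied by smooth functions of the coordinates; on $K$ these are uniformly bounded by \eqref{Iulem1}, \eqref{Iulem3}, and $u_t'''=(u_t'''/u_t'')\cdot u_t''$ via \eqref{Iulem4} together with \eqref{Iulem3}. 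Thus $\sup_K|\nabla_{\hat g_0} g|_{\hat g_0}\leqslant C_K$.

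The main obstacle is the bookkeeping in (3): one must carefully expand $\p_m g_{i\bar j}$ in the bundle charts over each $U_i$ and across the transition $y_{(\ell)}=(x_\ell/x_i)^k y_{(i)}$, to confirm that no derivative of $u_t$ of order higher than three enters and that all coordinate-dependent factors stay bounded on $K$. Parts (1) and (2) are immediate once Lemma~\ref{Iulem} is available; the only subtlety there is that the degeneration rate of $u_t''$ must match that of the reference potential, which it does by \eqref{uref} and \eqref{Iulem3}.
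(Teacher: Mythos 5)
Your proposal is correct and follows essentially the same route as the paper: part (1) via positivity of $u_t''$ and $u_t'>a_t$ applied to the explicit local expressions for $g$ and $\chi$ (the paper keeps the general matrix form while you diagonalise at a point, which is equivalent by unitary invariance), part (2) via the trace formula $\operatorname{tr}_{\hat g_0}g=\frac{u_t''}{\hat u_0''}+(n-1)\frac{u_t'}{\hat u_0'}$ combined with Lemma \ref{Iulem}, and part (3) via the explicit expansion of $\partial_k g_{i\bar j}$, which involves only $u_t'$, $u_t''$, $u_t'''$ with coefficients bounded on compact sets away from the divisors. No gaps.
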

\begin{proof}
In local coordiantes 
\begin{equation}
g_{i \bar{j}}(t)=e^{-\rho} u^{\prime} \delta_{i j}+e^{-2 \rho} \bar{x}_i x_j\left(u^{\prime \prime}-u^{\prime}\right), \quad \text{and} \quad \chi_{i \bar{j}}=e^{-\rho} \delta_{i j}-e^{-2 \rho} \bar{x}_i x_j.
\end{equation}
Then it follows easily that 
$$
g_{i \bar{j}}(t) \geqslant u_t^{\prime} e^{-\rho}\left(\delta_{i j}-\frac{\bar{x}_i x_j}{\sum_k\left|x_k\right|^2}\right) \geqslant a_t e^{-\rho}\left(\delta_{i j}-\frac{\bar{x}_i x_j}{\sum_k\left|x_k\right|^2}\right)=a_t \chi_{i \bar{j}},
$$
In local coordinates, we use \eqref{gsimpU} and \eqref{g0simpU} to compute  
\begin{equation}
\operatorname{tr}_{\hat{g}_0} g=\frac{u_t^{\prime \prime}}{\hat{u}_0^{\prime \prime}}+(n-1) \frac{u_t^{\prime}}{\hat{u}_0^{\prime}}.
\end{equation}
From Lemma \ref{Iulem}, we have 
$$ \frac{u_t'}{\hat{u}_0'} \leqslant \frac{b_0}{a_0} \quad \text{and} \quad \frac{u_t^{\prime \prime}}{\hat{u}_0^{\prime \prime}}=\frac{\left(1+e^{k \rho}\right)^2}{k\left(b_0-a_0\right) e^{k \rho}} u_t^{\prime \prime} \leqslant C, $$
which implies (2). Furthermore, 
\begin{equation}
\frac{\partial}{\partial x_k} g_{i \bar{j}}=e^{-2 \rho}\left(u_t^{\prime \prime}-u_t^{\prime}\right)\left(\bar{x}_k \delta_{i j}+\bar{x}_i \delta_{j k}\right)+e^{-3 \rho} \bar{x}_i x_j \bar{x}_k\left(u_t^{\prime \prime \prime}-3 u_t^{\prime \prime}+2 u_t^{\prime}\right).
\end{equation}
Away from the divisors $[D_{\infty}] \cup [D_{0}]$, $\rho$ and $x_i$
 are bounded which implies (3).
 \end{proof}

Following from Theorem \ref{Imainthma}, we can replicate the same argument as in \cite{SW11} verbatim to show Gromov Hausdorff convergence. 
\begin{thm}
    $(M, g(t))$ converges to $\left(\CC \mathbb{P}^{n-1}, a_T g_{F S}\right)$ in the Gromov-Hausdorff sense as $t \rightarrow T$.
\end{thm}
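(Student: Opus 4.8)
The plan is to follow \cite{SW11} and exhibit the bundle projection $\pi\colon X_{n,k}\to\CC\PP^{n-1}$ as an $\e(t)$-Gromov--Hausdorff approximation from $(X_{n,k},d_{g(t)})$ onto $(\CC\PP^{n-1},d_{a_T g_{\mathrm{FS}}})$ with $\e(t)\to 0$ as $t\to T$, where $d_{a_T g_{\mathrm{FS}}}=\sqrt{a_T}\,d_{g_{\mathrm{FS}}}$. Since $\pi$ is onto it is automatically $\e$-dense, so the statement reduces to the almost-isometry bound
\begin{equation*}
\bigl|\,d_{g(t)}(x,y)-\sqrt{a_T}\,d_{g_{\mathrm{FS}}}(\pi x,\pi y)\,\bigr|\leqslant\e(t)\qquad\text{for all }x,y\in X_{n,k},
\end{equation*}
which I will get from matching upper and lower bounds, using that in Case I both $a_t\to a_T$ and $b_t=a_t+2k(T-t)\to a_T$.

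The lower bound is immediate from Theorem \ref{Imainthma}(1): $\omega(t)\geqslant a_t\chi=a_t\pi^*\omega_{\mathrm{FS}}$ forces $L_{g(t)}(\sigma)\geqslant\sqrt{a_t}\,L_{g_{\mathrm{FS}}}(\pi\circ\sigma)$ for every curve $\sigma$, hence $d_{g(t)}(x,y)\geqslant\sqrt{a_t}\,d_{g_{\mathrm{FS}}}(\pi x,\pi y)$. For the upper bound I would first record two facts. \emph{(a) The fibres shrink:} by the rotational symmetry the induced metric on $F_p:=\pi^{-1}(p)\cong\CC\PP^1$ is, in the variable $\rho\in(-\infty,\infty)$, comparable to $u_t''(\rho)\,(d\rho^2+d\theta^2)$, so a meridian from the $D_0$-point to the $D_\infty$-point of $F_p$ has $g(t)$-length at most $C\int_{-\infty}^{\infty}\sqrt{u_t''(\rho)}\,d\rho$, and \eqref{Iulem3} bounds this by $C\sqrt{T-t}\int_{-\infty}^{\infty}e^{k\rho/2}(1+e^{k\rho})^{-1}\,d\rho=C'\sqrt{T-t}$; the circles $\{\rho=\text{const}\}$ satisfy the same bound (the factor in \eqref{Iulem3} is bounded), so $\diam_{g(t)}(F_p)\leqslant C\sqrt{T-t}$, and since the integrand is integrable at $\rho=\pm\infty$ this includes the two divisor points of $F_p$. \emph{(b) The base metric converges:} convexity of $u_t'$ with \eqref{u'ab} gives $a_t\leqslant u_t'(\rho)\leqslant b_t$, so $u_t'(\rho)\to a_T$ uniformly in $\rho$. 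Now, given $x,y$, set $p=\pi x$, $q=\pi y$, pick $x_0\in F_p$ with $\rho(x_0)=0$, let $\gamma\colon[0,1]\to\CC\PP^{n-1}$ be a minimising $g_{\mathrm{FS}}$-geodesic from $p$ to $q$, and let $\tilde\gamma$ be its horizontal lift from $x_0$ with respect to the connection under which $\rho$ is preserved along horizontal curves; then $\tilde\gamma$ stays on $\{\rho=0\}$. As the Calabi ansatz metric is block-diagonal for the splitting $TX^\circ=\mathcal H\oplus\mathcal V$ ($\mathcal V=\ker d\pi$, $\mathcal H$ the horizontal subspace) with $g(t)|_{\mathcal H}=u_t'(\rho)\,\pi^*g_{\mathrm{FS}}$, one gets $L_{g(t)}(\tilde\gamma)=\sqrt{u_t'(0)}\,d_{g_{\mathrm{FS}}}(p,q)$, whence
\begin{equation*}
d_{g(t)}(x,y)\leqslant\diam_{g(t)}(F_p)+L_{g(t)}(\tilde\gamma)+\diam_{g(t)}(F_q)\leqslant\sqrt{u_t'(0)}\,d_{g_{\mathrm{FS}}}(p,q)+C\sqrt{T-t}.
\end{equation*}
Combining with the lower bound and $\sqrt{u_t'(0)}\to\sqrt{a_T}$ gives the almost-isometry with $\e(t)=C\sqrt{T-t}+C\,\bigl|\sqrt{u_t'(0)}-\sqrt{a_T}\bigr|\,\diam_{g_{\mathrm{FS}}}\CC\PP^{n-1}\to0$, and the Gromov--Hausdorff convergence follows. (Theorem \ref{Imainthma}(2) supplies the consistent uniform bound $\diam_{g(t)}X_{n,k}\leqslant\sqrt{C}\,\diam_{\hat g_0}X_{n,k}$.)

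I expect the main obstacle to be the book-keeping at the divisors $D_0$ and $D_\infty$, where the coordinate $\rho$ degenerates: the distance estimates above live naturally on $X^\circ=X_{n,k}\setminus(D_0\cup D_\infty)$ and must be checked to extend continuously across the two divisors, which is exactly where one uses that $\int_{-\infty}^{\infty}\sqrt{u_t''}\,d\rho<\infty$ (so a divisor point is still within $C\sqrt{T-t}$ of the $\rho=0$ level inside its fibre) together with the uniform squeeze $a_t\leqslant u_t'\leqslant b_t$. One also needs the standard block-diagonal form of the Calabi ansatz relative to this connection, $g(t)|_{\mathcal H}=u_t'(\rho)\,\pi^*g_{\mathrm{FS}}$, which follows by rotational symmetry from \eqref{gsimpU}; the remainder is a routine transcription of the argument in \cite{SW11}.
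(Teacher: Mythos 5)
Your argument is correct and is essentially the proof the paper intends: the paper simply invokes Theorem \ref{Imainthma} and defers to the fibration argument of \cite{SW11} ``verbatim'', which is exactly what you have written out (shrinking fibres via the $u_t''$ bound so that $\mathrm{diam}_{g(t)}(\pi^{-1}(p))\leqslant C\sqrt{T-t}$, horizontal lifts along $\{\rho=0\}$ using $g(t)|_{\mathcal H}=u_t'(\rho)\,\pi^*g_{\mathrm{FS}}$, and the matching lower bound from $\omega(t)\geqslant a_t\chi$). The details you supply, including the convergence of $\int_{-\infty}^{\infty}\sqrt{u_t''}\,d\rho$ so the estimates extend across $D_0\cup D_\infty$, are precisely the standard treatment.
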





We now show that the norm of the curvature tensor blows up at a Type I rate. To do this, we first prove the following proposition on the scalar curvature. 
\begin{prop}\label{IR}
    For solution to \eqref{cont} in Case I, there exists $C>0$ such that 
    \begin{equation}
        R(t) \leqslant \frac{C}{T-t},
    \end{equation}
    for all $t \in [0,T).$ Furthermore, the blow up is essential everywhere on $X$. 
\end{prop}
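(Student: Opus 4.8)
The plan is to reduce the scalar curvature bound to a purely one-dimensional problem in the variable $\rho$, using the rotational symmetry. Recall that $R = \operatorname{tr}_\omega \operatorname{Ric}(\omega)$, and from \eqref{gsimpU} and \eqref{RicsimpU} one computes in the simplified coordinates
\begin{equation}
R = \frac{v_t''}{u_t''} + (n-1)\frac{v_t'}{u_t'},
\end{equation}
where $v_t = n\rho - (n-1)\log u_t' - \log u_t''$. The first step is to expand $v_t'$ and $v_t''$ in terms of the derivatives of $u_t$, so that $R$ becomes an explicit rational expression in $u_t', u_t'', u_t''', u_t^{(4)}$. The factors $u_t'$ are harmless since, by \eqref{Iulem1}, $u_t'$ is uniformly bounded above and below away from $0$. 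By Lemma \ref{Iulem}(3), $u_t'' \asymp \frac{e^{k\rho}}{(1+e^{k\rho})^2}(T-t)$, so every appearance of $\tfrac{1}{u_t''}$ contributes a factor of order $\tfrac{1}{T-t}$ (times a bounded function of $\rho$), and by Lemma \ref{Iulem}(4), $\tfrac{|u_t'''|}{u_t''} \le C$. The only genuinely new quantity is the fourth derivative term $\tfrac{u_t^{(4)}}{u_t''}$ that enters $v_t''$; I would control it using the differentiated equation \eqref{u2}, which expresses $u_t''$ (a bounded-in-$\rho$, $O(T-t)$ quantity after dividing by the reference) in terms of $u_t^{(4)}/u_t''$ and lower-order terms already bounded by Lemma \ref{Iulem}. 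Solving \eqref{u2} for $t\,\tfrac{u_t^{(4)}}{u_t''}$ shows this combination is $O(1)$ away from $t=0$, hence $\tfrac{u_t^{(4)}}{u_t''} \le C/t$, which near $t=T$ is bounded; combined with the $\tfrac{1}{u_t''}$ factor this gives the claimed $R \le C/(T-t)$.

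For the "essential everywhere" assertion, I would produce an explicit lower bound matching $c/(T-t)$ at every point. The natural candidate term is the part of $R$ coming from the fiber directions, namely the $\tfrac{v_t'}{u_t'}$ contribution or, more concretely, the term $(n-1)^2\tfrac{u_t''}{(u_t')^2}$ that appears with a favorable sign inside $v_t''/u_t''$ (it equals $\partial_\rho\big(-(n-1)u_t''/u_t'\big)$ contributions plus positive pieces). Since $u_t' \le a_t + 2k(T-t) \le C$ is bounded, a lower bound on $u_t''$ of the form $u_t'' \ge c(\rho)(T-t)$ for $\rho$ in any fixed compact set — which is exactly the lower estimate in \eqref{Iulem3} — forces the corresponding term in $R$ to be $\ge c'(\rho)/(T-t)$ provided the competing negative terms are of lower order. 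The cleanest route is: fix any $p \in X \setminus (D_0 \cup D_\infty)$, so $\rho(p)$ is finite; pick a sequence $t_i \to T$; estimate $R(p, t_i)$ from below using the rational expression, bounding the finitely many "bad" terms (those involving $u_t'''$, $u_t^{(4)}$) by the a priori bounds above and isolating the strictly positive leading term of size $1/(T-t_i)$. For points on $D_0$ or $D_\infty$ one instead takes $p_i$ a sequence of points tending to the divisor together with $t_i \to T$, or argues by continuity from nearby interior points.

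The main obstacle I anticipate is getting a clean \emph{lower} bound on $R$ that survives at every point, including near the exceptional divisors $D_0$ and $D_\infty$. On the interior the rational-function bookkeeping is routine, but near $\rho \to \pm\infty$ the weight $\tfrac{e^{k\rho}}{(1+e^{k\rho})^2}$ in $u_t''$ degenerates, and one must check that the negative terms in $R$ (which also involve this weight and its logarithmic derivatives) do not cancel the positive leading term — this requires a careful asymptotic analysis of $v_t'$ and $v_t''$ as $\rho \to \pm\infty$, using the divisor-potential regularity from \eqref{DivPot1}–\eqref{DivPot2} to control $u_t'''$ and $u_t^{(4)}$ there. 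A secondary technical point is ensuring the fourth-derivative estimate $\tfrac{u_t^{(4)}}{u_t''} \le C$ holds uniformly in $\rho$ (not just on compact sets); if only a compact-set bound is available, the "essential blow up" statement should still be salvageable by restricting the sequence $(p_i, t_i)$ to lie in a fixed compact region of $X \setminus (D_0 \cup D_\infty)$, which suffices for the conclusion as stated.
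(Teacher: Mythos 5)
Your route misses the key simplification that drives the paper's proof, and the part you explicitly flag as ``the main obstacle'' is precisely where your argument is not closed. The paper does not compute $R$ from the Ricci potential $v_t$ at all. Instead it traces the continuity equation $\omega = \omega_0 - t\,\Ric(\omega)$ with respect to $\omega$ to get the identity
\begin{equation*}
R(t) \;=\; \frac{1}{t}\left(\tr_{\omega}\omega_0 - n\right) \;=\; \frac{u_0''}{t\,u_t''} + (n-1)\frac{u_0'}{t\,u_t'} - \frac{n}{t},
\end{equation*}
which eliminates $u_t'''$ and $u_t^{(4)}$ from the problem entirely. The upper bound then reduces to $u_0'/u_t' \leqslant C$ and $u_0''/u_t'' \leqslant C/(T-t)$; the latter is uniform over all of $X$ because $u_0''$ and the lower bound for $u_t''$ in \eqref{Iulem3} carry the \emph{same} weight $e^{k\rho}/(1+e^{k\rho})^2$, with the endpoint behaviour at $\rho\to\pm\infty$ checked via the divisor potentials \eqref{lim1u}--\eqref{lim2u}. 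The essential lower bound is equally immediate: drop the positive middle term and use the \emph{upper} bound on $u_t''$ from \eqref{Iulem3} to get $R \geqslant c/(T-t) - C$ everywhere, again with the divisor limits handling $\rho\to\pm\infty$.

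In your approach, after expanding $v_t''$ you face the term $\bigl(-u_t^{(4)}/u_t'' + (u_t''')^2/(u_t'')^2\bigr)/u_t''$. Even granting your (correct) observation that \eqref{u2} yields $u_t^{(4)}/u_t'' \leqslant C$ for $t$ bounded away from $0$, multiplying a merely bounded bracket by $1/u_t'' \sim (1+e^{k\rho})^2 e^{-k\rho}(T-t)^{-1}$ gives a bound that degenerates as $\rho\to\pm\infty$; you only obtain $R\leqslant C/(T-t)$ on compact subsets of $X\setminus(D_0\cup D_\infty)$. Closing this requires showing that $(\log u_t'')''$ vanishes like $e^{\mp k\rho}$ near the divisors \emph{uniformly in $t$}, i.e.\ uniform regularity of the divisor potentials $u_{t,[D_0]}, u_{t,[D_\infty]}$ --- a substantive estimate you do not supply. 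Likewise, your fallback of restricting $(p_i,t_i)$ to a fixed compact set away from the divisors proves essential blow-up only off $D_0\cup D_\infty$, which is strictly weaker than the stated ``essential everywhere on $X$''; the paper gets the full statement because the ratio $u_0''/u_t''$ has finite positive limits at $\rho=\pm\infty$. I would recommend replacing your computation with the traced identity above, after which both halves of the proposition follow from Lemma \ref{Iulem} in a few lines.
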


\begin{proof}
    We first take the trace of \eqref{cont} which yields 
    $$ R(t) = \frac{1}{t}\left( \tr_{\omega} \omega_0 -n \right).$$
    In the coordinates given by \eqref{gsimpU}, the trace above becomes 
    $$R(t) = \frac{u_0''}{tu''} +(n-1)\frac{u_0'}{tu_t'} - \frac{n}{t}.$$
    Since $a_t$ is uniformly bounded from below, 
    \begin{equation}\label{Iu_t'}
        \frac{{u_0}'}{u_t'} \leqslant C,
    \end{equation}
    and using \eqref{Iulem1}, we have for compact sets $K \subset \subset X \setminus ([D_0] \cup [D_\infty])$ we have 
    \begin{equation}\label{Iu''}
        \frac{u_0''}{u_t''}  \leqslant \frac{C}{(T-t)}\frac{(1+e^{k \rho})^2u_0''}{e^{k \rho}} \leqslant \frac{C}{T-t}.
    \end{equation}
    To obtain bounds near the singular divisors, we take derivatives of both equations in \eqref{DivPot} 
    \begin{equation}\label{lim1u}
        u^{\prime \prime}_0=k^2 e^{k \rho} u_{[D_0]}^{\prime}\left(e^{k \rho}, 0\right)+k^2 e^{2 k \rho} u_{[D_0]}^{\prime \prime}\left(e^{k \rho}, 0\right),
    \end{equation} 
    and
    \begin{equation}\label{lim2u}
        u^{\prime \prime}_0=k^2 e^{-k \rho} u_{[D_\infty]}^{\prime}\left(e^{-k \rho}, 0\right)+k^2 e^{-2 k \rho} u_{[D_\infty]}^{\prime \prime}\left(e^{-k \rho}, 0\right).
    \end{equation}
    Thus, we have 
    \begin{equation}
        \lim_{\rho \rightarrow -\infty} \frac{(1+e^{k \rho})^2u_0''}{e^{k \rho}} = k^2 \quad \text{and} \quad 
        \lim_{\rho \rightarrow \infty} \frac{(1+e^{k \rho})^2u_0''}{e^{k \rho}} = k^2,
    \end{equation}
    Therefore \eqref{Iu''} holds on the entire manifold $X$ and Proposition \ref{IR} is proven. 

    Finally, we use the upper bound of $u_t''$ in Lemma  \ref{Iulem} to obtain 
    \begin{equation}\label{IRess}
        R(t) \geqslant \frac{u_0''}{tu_t''} - \frac{n}{t} > \frac{C}{(T-t)} - C,
    \end{equation}
    for any points away from singular fibers $[D_0] \cup [D_\infty]$. To extend the lower bound across the whole manifold, we compute the limits using \eqref{lim1u} and \eqref{lim2u} to obtain \eqref{IRess} on the whole of $X$. Thus, the Type I blow up rate is essential on every point in $X$. 
\end{proof}

From scalar curvature bounds, we obtain bounds on the Riemannian curvature tensor.  

\begin{prop}
Under the continuity method, we have 
\begin{equation}\label{IRbdd}
    |\Rm(\omega(t))|_{\omega(t)} \leqslant \frac{C}{T-t}. 
\end{equation}
\end{prop}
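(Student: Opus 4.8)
The plan is to exploit the Calabi symmetry to reduce the full curvature tensor norm to a handful of scalar quantities built from $u_t'$, $u_t''$, $u_t'''$ and their ratios, and then to invoke the estimates already collected in Lemma~\ref{Iulem}. Recall that for a Calabi-symmetric metric the Riemann curvature tensor is essentially determined (up to bounded combinatorial factors depending only on $n$) by the quantities
\begin{equation*}
\frac{u_t'''}{(u_t'')^{?}},\quad \frac{u_t''-u_t'}{?},\quad \text{and curvature terms of the fibre } \CC\PP^1 \text{ and base } \CC\PP^{n-1},
\end{equation*}
so that $|\Rm|_{\omega}$ is controlled by an expression of the schematic form
\begin{equation*}
|\Rm(\omega(t))|_{\omega(t)} \;\leqslant\; C\left( \frac{|u_t'''|}{u_t''}\cdot\frac{1}{u_t''}\cdot e^{?\rho}\;+\;\frac{1}{u_t'}\;+\;\frac{u_t''}{(u_t')^2}\;+\;\cdots\right),
\end{equation*}
where each bracketed term is a sectional-curvature-type quantity. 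I would first write this reduction carefully (following the computations in \cite{S15} for the K\"ahler-Ricci flow, which carry over verbatim since only the metric enters), listing the finitely many curvature components in the orthonormal frame diagonalising \eqref{gsimpU}.

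The second step is purely bookkeeping: plug in the bounds from Lemma~\ref{Iulem}. We have $u_t'$ bounded above and below away from $0$ by \eqref{Iulem1}, we have $u_t'' \asymp \dfrac{e^{k\rho}}{(1+e^{k\rho})^2}(T-t)$ by \eqref{Iulem3}, and we have $|u_t'''|/u_t'' \leqslant C$ by \eqref{Iulem4}. The terms involving only $u_t'$ (the "horizontal" sectional curvatures coming from the base) are then $O(1)$, while every term carrying a factor of $1/u_t''$ — these are the "vertical" and "mixed" sectional curvatures associated to the collapsing $\CC\PP^1$ fibre — is of size
\begin{equation*}
\frac{1}{u_t''}\cdot(\text{bounded}) \;\leqslant\; C\,\frac{(1+e^{k\rho})^2}{e^{k\rho}}\cdot\frac{1}{T-t}\cdot(\text{bounded}),
\end{equation*}
and the factor $(1+e^{k\rho})^2/e^{k\rho}$ is exactly the quantity shown to be bounded on all of $X$ (including the limits $\rho\to\pm\infty$) in the proof of Proposition~\ref{IR}. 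Hence every term is $\leqslant C/(T-t)$, which gives \eqref{IRbdd}.

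I expect the main obstacle to be purely organisational rather than conceptual: one must be careful that the curvature components involving $u_t'''$ genuinely appear only in the combination $u_t'''/u_t''$ (times bounded exponential factors and a further $1/u_t''$), so that \eqref{Iulem4} suffices and no bound on $u_t^{(4)}$ is needed; and one must double-check the behaviour at the two singular divisors $D_0$ and $D_\infty$, where $\rho\to\pm\infty$, using the smoothness of the divisor potentials $u_{[D_0]}$, $u_{[D_\infty]}$ exactly as in \eqref{lim1u}--\eqref{lim2u} to see that the apparently singular factor $(1+e^{k\rho})^2/e^{k\rho}$ stays bounded there. An alternative, slightly softer route — which I would mention but probably not pursue — is to combine the scalar curvature bound $R \leqslant C/(T-t)$ from Proposition~\ref{IR} with the lower Ricci bound (intrinsic to \eqref{cont}) and the metric equivalence $\operatorname{tr}_{\hat g_0}g \leqslant C$ from Theorem~\ref{Imainthma}; this controls $|\Ric|$ but not the full tensor, so the explicit Calabi-symmetric computation seems unavoidable for the stated conclusion.
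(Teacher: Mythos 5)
Your overall strategy (diagonalise via the Calabi symmetry, list the finitely many curvature components, and feed in Lemma~\ref{Iulem}) is the right skeleton and matches the paper's computation of \eqref{ICurv}. But the issue you flag as ``purely organisational'' is in fact a genuine gap: the fourth derivative does appear. In the frame diagonalising \eqref{gsimpU}, the purely fibre-direction component is
\begin{equation*}
R_{1\bar11\bar1}=e^{-2\rho}\left(-u_t^{(4)}+\frac{(u_t''')^{2}}{u_t''}\right),
\qquad
g^{1\bar1}g^{1\bar1}R_{1\bar11\bar1}=-\frac{u_t^{(4)}}{(u_t'')^{2}}+\frac{(u_t''')^{2}}{(u_t'')^{3}},
\end{equation*}
and Lemma~\ref{Iulem} gives no control whatsoever on $u_t^{(4)}$, so your second ``bookkeeping'' step cannot close on this term. (Your treatment of the remaining components is fine: the mixed term $-u_t'''/(u_t'u_t'')+u_t''/(u_t')^{2}$ is actually $O(1)$, not $O(1/(T-t))$, since the lone $u_t'''$ pairs with a single $u_t''$, and the base terms involve only $u_t'$ and $u_t''$.)

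The missing idea, which is how the paper closes the argument, is that the problematic combination $(u_t''')^{2}/(u_t'')^{3}-u_t^{(4)}/(u_t'')^{2}$ is exactly the part of the scalar curvature \eqref{Rvu} not already bounded by Lemma~\ref{Iulem}: all other terms in \eqref{Rvu} are $O(1)$ by \eqref{Iulem1}, \eqref{Iulem3}, \eqref{Iulem4}. Hence the upper bound $R\leqslant C/(T-t)$ of Proposition~\ref{IR} together with the lower bound $R\geqslant -n/t$ (built into \eqref{cont}, since $\mathrm{Ric}=(\omega_0-\omega)/t\geqslant-\omega/t$) sandwiches this combination between $-C$ and $C/(T-t)$, with no separate estimate on $u_t^{(4)}$ ever being produced. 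Equivalently, one can take a further $\rho$-derivative of the Monge--Amp\`ere equation, i.e.\ solve \eqref{u2} for $u_t^{(4)}/u_t''$ and bound the right-hand side by Lemma~\ref{Iulem}; either way, an input beyond Lemma~\ref{Iulem} --- the equation itself differentiated twice, or its trace in the form of Proposition~\ref{IR} --- is indispensable, and your proposal as written does not supply it. Your closing remark correctly senses that the scalar curvature bound is relevant, but it is not an ``alternative softer route'': it is the essential ingredient that must be combined with the explicit component computation.
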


\begin{rem}
    Since we know that the scalar curvature blows up at an essential Type I rate everywhere on $X$, the curvature norm must also do the same since $R(t) \leqslant C(n) |\Rm(\omega(t))|_{\omega(t)}$ for some $C$ depending on the dimension of $X$. 
\end{rem}

\begin{proof}
 The holomorphic bisectional curvature for metrics with Calabi symmetry condition is calculated in \cite{Cao96} (also see \cite{S15}), 
\begin{equation}
\begin{aligned}
R_{i \bar{j} k \bar{l}}=& e^{-2 \rho}\left(u_t^{\prime}-u_t^{\prime \prime}\right)\left(\delta_{i j} \delta_{k l}+\delta_{i l} \delta_{k j}\right) \\
&+e^{-2 \rho}\left(3 u_t^{\prime \prime}-2 u_t^{\prime}-u_t^{\prime \prime \prime}\right)\left(\delta_{i j} \delta_{k l 1}+\delta_{i l} \delta_{k j 1}+\delta_{k l} \delta_{i j 1}+\delta_{k j} \delta_{i l 1}\right) \\
&+e^{-2 \rho}\left(6 u_t^{\prime \prime \prime}-11 u_t^{\prime \prime}-u_t^{(4)}+6 u_t^{\prime}+\frac{\left(u_t^{\prime \prime}-u_t^{\prime \prime \prime}\right)^{2}}{u_t^{\prime \prime}}\right) \delta_{i j k l 1} \\
&+e^{-2 \rho} \frac{\left(u_t^{\prime}-u_t^{\prime \prime}\right)^{2}}{u_t^{\prime}}\left(\delta_{i j \hat{1}} \delta_{k l 1}+\delta_{i l 1} \delta_{k j 1}+\delta_{k l 1} \delta_{i j 1}+\delta_{k j \hat{1}} \delta_{i l 1}\right)
\end{aligned}
\end{equation}

By applying a unitary transformation, we can send any point $p \in \CC^n \backslash \{0\}$ to $(x_1,0,...,0)$. Then all the non-vanishing term of the holomorphic bisectional curvature are given by 
\begin{equation}
\begin{aligned}
&R_{1 \overline{1} 1 \overline{1}}=e^{-2 \rho}\left(-u_t^{(4)}+\frac{\left(u_t^{\prime \prime \prime}\right)^{2}}{u_t^{\prime \prime}}\right) \\
&R_{k \bar{k} k \bar{k}}=2 e^{-2 \rho}\left(u_t^{\prime}-u_t^{\prime \prime}\right), k>1 \\
&R_{1 \overline{1} k \bar{k}}=e^{-2 \rho}\left(-u_t^{\prime \prime \prime}+\frac{\left(u_t^{\prime \prime}\right)^{2}}{u_t^{\prime}}\right), k>1 \\
&R_{k \bar{k} l \bar{l}}=e^{-2 \rho}\left(u_t^{\prime}-u_t^{\prime \prime}\right), k>1, l>1, k \neq l .
\end{aligned}
\end{equation}
In theses coordinates, by \eqref{gsimpU}, the norm of the curvature tensor is given by
\begin{equation}\label{ICurv}
    \begin{aligned}
        |\Rm(\omega)|_\omega^2 &= \sum_{k,l=1}^n g^{k\bar{k}}g^{k\bar{k}} g^{l\bar{l}}g^{l\bar{l}} R_{k\bar{k} l \bar{l}} \overline{R_{k\bar{k} l \bar{l}}} \\
        &= \left(-\frac{u_t^{(4)}}{u_t''^2}+\frac{\left(u_t^{\prime \prime \prime}\right)^{2}}{u_t''^3}\right)^2 + (n-1)(n+3) \left( \frac{1}{u_t'} - \frac{u_t''}{u_t'^2}\right)^2 + (n-1) \left( - \frac{u_t'''}{u_t' u_t''} + \frac{u_t''}{u_t'^2}\right)^2.
    \end{aligned}
\end{equation}

On the other hand, taking the trace of the Ricci curvature using  \eqref{gsimpU} and \eqref{RicsimpU} yields
\begin{equation}\label{Rvu}
    R(t) = -\frac{2(n-1)}{u'} \frac{u_t'''}{u_t''} + \frac{n(n-1)}{u_t'} - \frac{(n-1)(n-2)u_t''}{u_t'^2} + \frac{u_t'''^2}{u_t''^3} - \frac{u_t^{(4)}}{u_t''^2} 
\end{equation}

Using Lemma \ref{IR} and the estimates in Lemma \ref{Iulem}, we obtain 
\begin{equation}
    -C_1 \leqslant \frac{u_t'''^2}{u_t''^3} - \frac{u_t^{(4)}}{u_t''^2} \leqslant \frac{C_1}{T-t}. 
\end{equation}

Once again, we use the estimates in Lemma \ref{Iulem}, there exists constants $C_i>0$, $i=2,3$ such that 
$$  -C_2 \leqslant \frac{1}{u_t'} - \frac{u_t''}{u_t'^2}  \leqslant C_2,$$
and
$$ - C_3 \leqslant - \frac{u_t'''}{u_t' u_t''} + \frac{u_t''}{u_t'^2 } \leqslant C_3. $$
Combining the three inequalities with \eqref{ICurv} give us \eqref{IRbdd}.
\end{proof}

\subsection{Analysis for Case II}\label{S4}
Since the continuity method evolves cohomology as
\begin{equation}\label{IIalpha}
\alpha_t =\left (T-t \right) c_1(X),
\end{equation}
where
$$T =  \frac{a_0}{n-k}.$$

We apply a rescaling to obtain a normalised equation which keeps the class of the metric constant. From \eqref{IIalpha} we see that the appropriate rescaling is 
\begin{equation}
    \widetilde{\omega}(t) = \frac{\omega(t)}{T-t},
\end{equation}
from which \eqref{cont} becomes 
\begin{equation}\label{ncont}
    \widetilde{\omega}(t) = \frac{\widetilde{\omega_0}}{T-t} - \frac{t}{T-t}\Ric(\widetilde{\omega}(t)).
\end{equation}
This corresponds to the classical continuity method proposed by Aubin (see \cite{A76,A84,ZZ20}). If the manifold admits a unique K\"ahler Einstein metric, $\omega_{KE}$ then $\omega(t)$ converges in the $C^\infty(X,\omega_0)$ topology as $t \rightarrow T$. 

Returning to \eqref{cont}, we take the refernce metric based on Cohomology to be  
\begin{equation}\label{IIref}    \hat{\omega}_t =(T-t)((n-k)\chi + \theta). 
\end{equation}
Thus, we expect a full collapsed deformation. Like in Case I , we reduce the continuity method to a Monge-ampere equation
\begin{equation}\label{IIMA}
    \varphi(t) = t \log \left(\frac{\omega^n}{(T-t)^n\Omega}\right) \quad \text{with } \varphi(0) =0.
\end{equation}
\begin{lem}
The solution, $\varphi(t)$, the \eqref{IIMA} satisfies 
\begin{equation}
    |\varphi(t)|_{L^\infty(X)} \leqslant C
\end{equation}
for some $C>0$. Furthermore   
\begin{equation}\label{IIvol}
    C^{-1}(T-t)^n \leqslant \omega^n \leqslant C (T-t)^n \Omega.
\end{equation}
\end{lem}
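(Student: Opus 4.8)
The plan is to follow the proof of Lemma~\ref{Ilemphi}, taking advantage of the fact that Case~II is structurally simpler: after dividing by $T-t$ the cohomology class is the \emph{fixed} K\"ahler class $c_1(X)$, so the underlying Monge--Amp\`ere equation is genuinely non-degenerate. The first step is to pin down the reference data. Put $\omega_{c_1}:=(n-k)\chi+\theta$, so that $\hat\omega_t=(T-t)\omega_{c_1}$ as in~\eqref{IIref}. The Calabi potential of $\omega_{c_1}$ is $w(\rho)=(n-k)\rho+2\log(e^{k\rho}+1)$, and one checks directly that $w'(\rho)\in(n-k,\,n+k)$ is positive (here $1\le k\le n-1$, so $n-k\ge 1$), that $w''(\rho)=2k^2 e^{k\rho}(e^{k\rho}+1)^{-2}>0$, and that the boundary conditions of Calabi's lemma hold with $u_{[D_0]}(s)=u_{[D_\infty]}(s)=2\log(1+s)$, whence $u_{[D_0]}'(0)=u_{[D_\infty]}'(0)=2>0$. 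Thus $\omega_{c_1}$ is an honest smooth K\"ahler metric on $X_{n,k}$ lying in $c_1(X)$, $\omega_{c_1}^n$ is a smooth positive volume form, and therefore $C^{-1}(T-t)^n\Omega\le\hat\omega_t^{\,n}=(T-t)^n\omega_{c_1}^n\le C(T-t)^n\Omega$ uniformly in $t$; crucially this comparison is valid right up to and including the divisors $D_0$ and $D_\infty$, which is precisely what is not available in Case~I. Finally, unwinding~\eqref{cont} and~\eqref{IIMA} with $\Omega$ chosen so that $\Ric(\Omega)=\omega_{c_1}$ (as in Case~I) gives $\omega(t)=\hat\omega_t+\tfrac{\sqrt{-1}}{2\pi}\partial\bar\partial(\varphi(t)+\zeta)$ and $\omega(t)^n=(T-t)^n e^{\varphi(t)/t}\Omega$, where $\zeta$ is the fixed, $t$-independent bounded function with $\tfrac{\sqrt{-1}}{2\pi}\partial\bar\partial\zeta=\omega_0-T\omega_{c_1}$ (this form is $\partial\bar\partial$-exact since $[\omega_0]=T\,c_1(X)$).

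With this setup the $L^\infty$ bound on $\varphi(t)$ is a short application of the elliptic maximum principle to $\psi_t:=\varphi(t)+\zeta$. At a point where $\psi_t$ attains its maximum over $X$, the complex Hessian $\tfrac{\sqrt{-1}}{2\pi}\partial\bar\partial\psi_t$ is negative semidefinite, so $\omega(t)\le\hat\omega_t$ there and hence $\omega(t)^n\le\hat\omega_t^{\,n}\le C(T-t)^n\Omega$; this forces $e^{\varphi(t)/t}=\omega(t)^n/((T-t)^n\Omega)\le C$ at that point, i.e.\ $\varphi(t)\le t\log C\le T\log C$, and since $\zeta$ is a fixed bounded function we conclude $\sup_X\varphi(t)\le C$ uniformly. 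Symmetrically, at a minimum point of $\psi_t$ the Hessian is positive semidefinite, $\omega(t)\ge\hat\omega_t$, so $\omega(t)^n\ge\hat\omega_t^{\,n}\ge C^{-1}(T-t)^n\Omega$, giving $\varphi(t)\ge -T\log C$ there and hence $\inf_X\varphi(t)\ge -C$. (If one prefers not to invoke that $\omega_{c_1}$ is non-degenerate, the lower bound may instead be quoted verbatim from the pluripotential $L^\infty$ estimate for degenerate complex Monge--Amp\`ere equations~\cites{DP10,EGZ08}, exactly as in Lemma~\ref{Ilemphi}, using that $\int_X\omega(t)^n=(T-t)^n\int_X c_1(X)^n$ has the expected total mass; the direct argument above is cleaner here.)

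Finally, \eqref{IIvol} follows. For $t\in[\e,T)$ with $\e>0$ fixed the bound $|\varphi(t)|\le C$ gives $|\varphi(t)/t|\le C/\e$, hence $e^{-C/\e}(T-t)^n\Omega\le\omega(t)^n\le e^{C/\e}(T-t)^n\Omega$; for $t\in[0,\e]$ the solution is smooth on this compact subinterval of $[0,T)$, so all geometric quantities are uniformly bounded there and $T-t\in[T-\e,T]$ is bounded above and below, which again traps $\omega(t)^n$ between fixed multiples of $(T-t)^n\Omega$. Combining the two ranges, and using that $\Omega$ is a fixed smooth positive volume form, yields $C^{-1}(T-t)^n\Omega\le\omega(t)^n\le C(T-t)^n\Omega$ on all of $[0,T)$, which is~\eqref{IIvol}; feeding this back into~\eqref{IIMA} re-confirms $|\varphi(t)|_{L^\infty(X)}\le C$, so the two assertions are equivalent just as in Case~I.

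As for the main difficulty: there is essentially none of analytic substance. Once the correct reference $\hat\omega_t=(T-t)\omega_{c_1}$ and the $t$-independent potential shift $\zeta$ are identified, everything is the maximum principle. The one place requiring care is the uniformity of the comparison $\hat\omega_t^{\,n}\asymp(T-t)^n\Omega$ up to the singular fibres $D_0,D_\infty$; this is exactly where the hypothesis $1\le k\le n-1$ (equivalently, $c_1(X)$ K\"ahler) enters, and it is dispatched by the elementary Calabi-lemma computation for $w(\rho)$ above.
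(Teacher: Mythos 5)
Your proposal is correct, and for the upper bound it coincides with the paper's argument (compare $\hat\omega_t^{\,n}$ with $(T-t)^n\Omega$, then apply the maximum principle). Where you genuinely diverge is the lower bound: the paper, copying Case I, quotes the pluripotential $L^\infty$ estimate of \cite{DP10}--\cite{EGZ08} for degenerate complex Monge--Amp\`ere equations, whereas you observe that in Case II the rescaled reference form $\omega_{c_1}=(n-k)\chi+\theta$ is an honest K\"ahler metric (verified via the Calabi potential $w(\rho)=(n-k)\rho+2\log(e^{k\rho}+1)$, using $1\le k\le n-1$), so the comparison $\hat\omega_t^{\,n}\asymp(T-t)^n\Omega$ is uniform up to the divisors and the minimum-principle argument closes by itself. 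This buys a self-contained, elementary proof of the lower bound and makes explicit the uniformity of $\hat\omega_t^{\,n}\sim(T-t)^n\Omega$, which the paper only asserts; the paper's route has the advantage of being uniform across all three cases without checking positivity of the limiting class. Your bookkeeping of the potential shift $\zeta$ with $\frac{\sqrt{-1}}{2\pi}\partial\bar\partial\zeta=\omega_0-T\omega_{c_1}$ (legitimate since $[\omega_0]=T\,c_1(X)$ in Case II) and the separate treatment of $t\in[0,\e]$ when passing between $|\varphi|\le C$ and \eqref{IIvol} are both correct and, if anything, more careful than the text.
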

\begin{proof}
The proof is similar to that in Case I, using that \eqref{IIref} implies
$$ \hat{\omega}_t^n \sim C (T-t)^n \Omega. $$
Then applying a maximum principle argument yields an upper bound. From the results in \cites{DP10,EGZ08}, we obtain a uniform lower bound. 
\end{proof}

We aim to replicate Theorem \ref{Iulem} for Case II. However, we need to make some modifications. 
\begin{lem}\label{IIulem}
The estimates for $u= u_t(\rho)$ in Lemma \ref{Iulem} hold with the following modifications to (1) and (2); 
\begin{enumerate}
    \item  Equivalence of $u_t'(\rho)$ with $a_t$, that is 
    \begin{equation}\label{IIulem1}
        (n-k)(T-t) < u_t^{\prime}(\rho)< (n+k)(T-t),
            \end{equation}
            for all $t \in [0,T)$. 
    \item Limiting behaviour of $u_t(\rho)$ approaching the singular time,
    $$ \lim _{t \rightarrow T} u_t(\rho)=0. $$
    \item As with Case I, there exists $C>0$ such that  
    \begin{equation}\label{IIulem2}
        C^{-1}\frac{e^{k \rho}}{(1+e^{k\rho})^2} (T-t)  < u_t^{\prime \prime}(\rho) \leqslant C \frac{e^{k \rho}}{(1+e^{k\rho})^2} (T-t),
    \end{equation}
    and 
    \begin{equation}
        \frac{\left|u_t^{\prime \prime \prime}(\rho)\right|}{u_t''(\rho)} \leqslant C.
    \end{equation}
\end{enumerate}
\end{lem}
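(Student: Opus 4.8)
The plan is to mirror the proof of Lemma \ref{Iulem} and track where the boundedness of $a_t$ away from zero was used, replacing it with the equivalence $a_t, b_t \sim (T-t)$ coming from the explicit formulas $a_t = a_0 + (k-n)t$, $b_t = b_0 - (k+n)t$ together with $a_0(n+k) = b_0(n-k)$. First I would establish (1): from \eqref{u'ab} and convexity of $u_t'$ we again have $a_t \leqslant u_t'(\rho) \leqslant b_t$. Since we are in Case II, $a_0(n+k) = b_0(n-k)$ forces $a_t$ and $b_t$ to vanish simultaneously at $T = a_0/(n-k)$; a direct computation gives $a_t = (n-k)(T-t)$ and $b_t = (n+k)(T-t)$, which yields \eqref{IIulem1} immediately (with the strictness coming from strict convexity of $u_t'$, exactly as in Case I). Statement (2) then follows from $u_t(0) = 0$ and
\[
|u_t(\rho)| = \left| \int_0^\rho u_t'(s)\, ds \right| \leqslant (n+k)(T-t)|\rho| \longrightarrow 0
\]
as $t \to T$, for each fixed $\rho$.

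For (3), the upper bound for $u_t''$ proceeds as in Lemma \ref{Iulem}(3): the volume estimate \eqref{IIvol} gives $\det g(t) \leqslant C(T-t)^n \det \hat{g}_0$ with $\det \hat g_0 = e^{-n\rho}(\hat u_0')^{n-1}\hat u_0'' \leqslant C e^{-n\rho}$, and $\det g(t) = e^{-n\rho}(u_t')^{n-1} u_t''$, so $(u_t')^{n-1} u_t'' \leqslant C (T-t)^n \frac{e^{k\rho}}{(1+e^{k\rho})^2}$. The difference from Case I is that $u_t'$ is no longer bounded below by a fixed constant but by $(n-k)(T-t)$; dividing by $(u_t')^{n-1} \geqslant (n-k)^{n-1}(T-t)^{n-1}$ one loses exactly $n-1$ powers of $(T-t)$, leaving $u_t'' \leqslant C (T-t) \frac{e^{k\rho}}{(1+e^{k\rho})^2}$ — so the power of $(T-t)$ in \eqref{IIulem2} is unchanged, as claimed. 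The lower bound is symmetric: combine the lower bound in \eqref{IIvol} with the upper bound $u_t' \leqslant b_t = (n+k)(T-t)$ to get $(u_t')^{n-1} u_t'' \geqslant C^{-1}(T-t)^n \frac{e^{k\rho}}{(1+e^{k\rho})^2}$, hence $u_t'' \geqslant C^{-1}(T-t)\frac{e^{k\rho}}{(1+e^{k\rho})^2}$. Finally, the estimate on $|u_t'''|/u_t''$ is obtained verbatim from the Case I argument: rewrite \eqref{u1} as
\[
\frac{u_t'''}{u_t''} = \frac{u_t' - u_0'}{t} - (n-1)\frac{u_t''}{u_t'} + n,
\]
and bound each term using \eqref{IIulem1} and \eqref{IIulem2} — note $u_t''/u_t' \leqslant C(T-t)\cdot\frac{e^{k\rho}}{(1+e^{k\rho})^2} / ((n-k)(T-t))$, which is uniformly bounded since $\frac{e^{k\rho}}{(1+e^{k\rho})^2} \leqslant \frac14$, and $(u_t'-u_0')/t$ is bounded for $t$ bounded away from $0$ while being controlled near $t=0$ by short-time smoothness.

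The main obstacle is bookkeeping rather than a genuinely new idea: one must be careful that the singular-divisor behavior (the $\rho \to \pm\infty$ limits, handled via the divisor potentials $u_{[D_0]}, u_{[D_\infty]}$ as in \eqref{DivPot1}–\eqref{DivPot2}) still forces the estimates to hold on all of $X$ and not just on compact subsets of $X \setminus (D_0 \cup D_\infty)$, and that the ratio $u_t''/u_t'$ stays bounded uniformly in $t$ up to $T$ despite both numerator and denominator degenerating — this is exactly why the sharpened lower bound $u_t' \geqslant (n-k)(T-t)$ in (1), rather than a mere positivity statement, is the crucial input. A secondary point to check is the behavior as $t \to 0^+$ in the expression for $u_t'''/u_t''$, where one should invoke parabolic-type short-time estimates for \eqref{MAu} (equivalently, smoothness of the solution on $[0,\epsilon]$) to absorb the apparent $1/t$ singularity, just as is implicitly done in Case I.
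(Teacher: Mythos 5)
Your proposal is correct and follows essentially the same route as the paper: deduce $a_t=(n-k)(T-t)$, $b_t=(n+k)(T-t)$ from the Case II condition, integrate for (2), and feed the collapsed volume bound \eqref{IIvol} through $\det g = e^{-n\rho}(u_t')^{n-1}u_t''$ together with the new lower bound $u_t'\geqslant(n-k)(T-t)$ to lose exactly $n-1$ powers of $(T-t)$. Your explicit check that $u_t''/u_t'$ remains uniformly bounded (both numerator and denominator being $O(T-t)$) is in fact slightly more careful than the paper's one-line remark that the third-order estimate ``only relies on an upper bound for $u_t''$.''
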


\begin{rem}
    A key observation here is that $u_t$ is no longer uniformly bounded from below away from $0$.  
\end{rem}
\begin{proof}
    The proof only requires slight modification form the proof of Lemma \ref{Iulem}. Part (1) simplify follows from $a_t <  u_t' <b_t$,since $a_t = (n-k)(T-t)$ and similarly $b_t = (n+k)(T-t)$. Then for any $\rho \in \RR$
    $$
    |u_t(\rho)| = \left| \int_0^\rho u_t^{\prime}(s) ds \right|  \leqslant (T-t) |\rho| \rightarrow 0,
    $$
    as $t \rightarrow T$. 
    To obtain \eqref{IIulem2}, we again note that \eqref{Iulem3} implies 
    $$C^{-1}(T-t)^n\det(\hat{g}_0) \leqslant \det(g) \leqslant C (T-t)^n \det(\hat{g}_0).$$  
    In terms of the potential, $u_t$, the above inequality becomes 
    $$
    C^{-1}(T-t)^n \frac{e^{k \rho}}{(1+e^{k\rho})^2} \leqslant (u_t')^{n-1}u_t'' \leqslant C \frac{e^{k \rho}}{(1+e^{k\rho})^2} (T-t)^n.
    $$
    Using that $u_t \geqslant (n-k)(T-t)$ from \eqref{IIulem1},  our desired bound for $u_t''$ easily follows. The third order estimates only rely on an upper bound for $u_t''$ and thus can be derived in the same way as before in Lemma \ref{Iulem}. 
\end{proof}

With a lower bound for $u_t''$ identical to Case I, we obtain the same Type I blow up for scalar curvature and consequently 
\begin{prop}
    In Case II, the Riemannain curvature tensor blows up at a Type I rate. 
\end{prop}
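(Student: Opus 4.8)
The strategy is to follow verbatim the structure of the Case I curvature estimate, exploiting the fact that Lemma~\ref{IIulem} supplies exactly the same pointwise control on $u_t'$, $u_t''$ and $u_t'''/u_t''$ as Lemma~\ref{Iulem} did, with only the role of ``$a_0$ bounded away from $0$'' replaced by ``$u_t' \geqslant (n-k)(T-t)$''. First I would record the scalar curvature identity obtained by tracing \eqref{cont}: in the coordinates \eqref{gsimpU},
\begin{equation*}
R(t) = \frac{u_0''}{t\,u_t''} + (n-1)\frac{u_0'}{t\,u_t'} - \frac{n}{t},
\end{equation*}
and observe that the lower bound $u_t'' \geqslant C^{-1}\tfrac{e^{k\rho}}{(1+e^{k\rho})^2}(T-t)$ from \eqref{IIulem2} together with the asymptotics of $u_0''$ near $\rho=\pm\infty$ (via \eqref{lim1u}, \eqref{lim2u}) gives $u_0''/u_t'' \leqslant C/(T-t)$ on all of $X$, exactly as in Proposition~\ref{IR}. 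The only genuinely new point is the middle term: since now $u_t'$ is itself of size $(T-t)$, the quotient $u_0'/u_t'$ is of size $1/(T-t)$ rather than bounded; but this contributes a term of order $1/(t(T-t))$ to $R$, which is still $\leqslant C/(T-t)$ for $t$ bounded away from $0$, and the small-time regime is handled by smoothness of the solution. So one concludes $R(t) \leqslant C/(T-t)$.

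Next I would invoke the curvature-norm formula \eqref{ICurv} and the scalar curvature decomposition \eqref{Rvu}, both of which are purely algebraic consequences of the Calabi symmetry and hence unchanged. As in the Case I proof, $R(t) \leqslant C/(T-t)$ combined with the two-sided bounds on $u_t'$, $u_t''$ and the bound on $u_t'''/u_t''$ pins down the fourth-order combination:
\begin{equation*}
-C \leqslant \frac{(u_t''')^2}{(u_t'')^3} - \frac{u_t^{(4)}}{(u_t'')^2} \leqslant \frac{C}{T-t}.
\end{equation*}
For the remaining two groups of terms in \eqref{ICurv}, I must check that $\frac{1}{u_t'}-\frac{u_t''}{(u_t')^2}$ and $-\frac{u_t'''}{u_t'u_t''}+\frac{u_t''}{(u_t')^2}$ are $O\big((T-t)^{-1}\big)$: indeed $1/u_t' \sim (T-t)^{-1}$, and $u_t''/(u_t')^2 \leqslant C\,(T-t)\cdot(T-t)^{-2} = C(T-t)^{-1}$, while $u_t'''/(u_t'u_t'') = (u_t'''/u_t'')/u_t' \leqslant C(T-t)^{-1}$. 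Feeding these into \eqref{ICurv} yields $|\Rm(\omega(t))|_\omega^2 \leqslant C/(T-t)^2$, i.e.\ the Type~I bound $|\Rm(\omega(t))|_{\omega(t)} \leqslant C/(T-t)$.

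The main obstacle — and the reason this is not completely automatic — is precisely the failure of the uniform lower bound on $u_t'$: in Case I several quotients ($u_0'/u_t'$, $u_t''/(u_t')^2$, etc.) were \emph{bounded}, whereas here they are only $O\big((T-t)^{-1}\big)$, so one has to verify term by term in \eqref{ICurv} and \eqref{Rvu} that no product of two such quotients, nor any quotient that was previously $O(1/(T-t))$, deteriorates to order $(T-t)^{-2}$ or worse. In fact the worst individual terms are already of order $(T-t)^{-1}$ and no cross term multiplies two bad factors, so the bound $|\Rm|^2 \leqslant C/(T-t)^2$ survives; making this bookkeeping precise, uniformly up to the singular divisors $D_0\cup D_\infty$ using the boundary asymptotics \eqref{lim1u}–\eqref{lim2u}, is the only real work. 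I would also remark, exactly as in the Case~I remark, that since the scalar curvature blows up at an essential Type~I rate (inherited from the lower bound $R(t) \geqslant \tfrac{u_0''}{t\,u_t''}-\tfrac nt \geqslant \tfrac{C}{T-t}-C$ using the \emph{upper} bound on $u_t''$), the curvature-tensor norm necessarily does too, so the Type~I rate is sharp.
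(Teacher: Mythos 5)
Your proposal follows essentially the same route as the paper: bound the scalar curvature first (noting that $u_0'/u_t'$ is now only $O((T-t)^{-1})$), use the identity \eqref{Rvu} to control the fourth-order combination, and then check term by term via Lemma~\ref{IIulem} that each bracket in \eqref{ICurv} is $O((T-t)^{-1})$, giving $|\Rm|_{\omega}\leqslant C/(T-t)$. The only minor slip is your stated lower bound $-C$ for $\frac{(u_t''')^2}{(u_t'')^3}-\frac{u_t^{(4)}}{(u_t'')^2}$, which in Case II should be $-C/(T-t)$ since the terms moved to the other side of \eqref{Rvu} are themselves only $O((T-t)^{-1})$; this does not affect the conclusion.
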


\begin{proof}
    Like in Case I, we first bound scalar curvature. This can be done in the same way replacing \eqref{IRbdd} with $u_0/u_t' \leqslant C(T-t)^{-1}$. The bound \eqref{Iu''} remains unchanged along with the limit calculations for $\rho \rightarrow \pm  \infty$. Thus scalar curvature blows up at a Type I rate and \eqref{Rvu} holds. Furthermore, $u_t'$ and $u''_t < C(T-t)$ away from singular sets. Thus, the Type I blow up for scalar curvature is essential everywhere. Returning to the estimate for the curvature tensor, 
we use Lemma \ref{IIulem} to bound terms in \eqref{ICurv} as follows. There exists $C_i>0$, $i=1,2,3$ uniformly of $t$ such that 
$$ -\frac{C_1}{T-t} \leqslant -\frac{u_t^{(4)}}{u_t''^2}+\frac{\left(u_t^{\prime \prime \prime}\right)^{2}}{u_t''^3} \leqslant \frac{C_1}{T-t},$$
$$ -\frac{C_2}{T-t} \leqslant \frac{1}{u_t'} - \frac{u_t''}{u_t'^2}  \leqslant \frac{C_2}{T-t},$$
and
$$ -\frac{C_3}{T-t}  \leqslant - \frac{u_t'''}{u_t' u_t''} + \frac{u_t''}{u_t'^2 } \leqslant \frac{C_3}{T-t}. $$
Substituting the above estimate into \eqref{ICurv} reveals a type I rate. 
    
\end{proof}

\subsection{Analysis for Case III}\label{S5}
In Case III, the singular time is 
\begin{equation}
    T = \frac{a_0}{n-k}.
\end{equation}
Here $a_t = (n-k)(T-t)$ and $b_t>0$. The reference metric is given by 
\begin{equation}
    \hat{\omega}_t = \omega_0 + t((k-n) \chi-\theta) := \omega_0 + t\eta.
\end{equation}
Since $-\eta \in C_1(X)$, there exists a volume form $\Omega$ such that $\frac{\sqrt{-1}}{2 \pi} \partial \bar{\partial} \log \Omega=\eta$. One then reduce the continuity to the following Monge-ampere equations
\begin{equation}\label{IIIMA}
    \varphi(t) = t \log\left( \frac{\omega^n}{\Omega}\right).
\end{equation}
For this equation, we also have 
\begin{lem}
The solution, $\varphi(t)$ to \eqref{IIIMA} satisfies 
\begin{equation}
    |\varphi(t)|_{L^\infty(X)} \leqslant C
\end{equation}
for some $C>0$, and hence,  
\begin{equation}\label{IIIphi}
    C^{-1} \Omega \leqslant \omega^n \leqslant C \Omega.
\end{equation}
\end{lem}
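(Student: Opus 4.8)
The plan is to argue exactly as in Lemma~\ref{Ilemphi} and its Case~II counterpart: the upper bound on $\varphi(t)$ comes from a maximum principle, the lower bound from the pluripotential theory of degenerate complex Monge--Amp\`{e}re equations (\cites{DP10,EGZ08}), and then, as in the proof of Lemma~\ref{Ilemphi}, the two-sided bound on $\omega^n$ is equivalent to the $L^\infty$ bound on $\varphi(t)$ once $t$ is kept bounded away from the non-singular time $t=0$. What distinguishes Case~III is that the limiting class $\alpha_T=\tfrac{b_T}{k}[D_\infty]$ is \emph{big} and nef, with $\int_X\alpha_t^n=\tfrac1k(b_t^n-a_t^n)$ decreasing to $\tfrac{b_T^n}{k}>0$: the total volume does not collapse, so no power of $(T-t)$ appears and $\Omega$ is genuinely comparable to $\omega^n$, even though the reference form $\hat\omega_t$ itself degenerates along the contracted divisor $D_0$, where $\hat u_t'\to a_t\to 0$.

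For the upper bound I would first take $\Omega$ comparable to the reference volume $\hat\omega_0^n$ (both are smooth positive volume forms with Ricci form in $c_1(X)$, so their ratio is $e^{h}$ for a fixed bounded $h$; replacing $\Omega$ by $e^h\hat\omega_0^n$ only changes $\varphi(t)$ by the bounded term $t\,h$). Then $\hat\omega_t^n\le C\,\Omega$ uniformly on $X\times[0,T)$: in Calabi coordinates $\hat\omega_t^n/\hat\omega_0^n=(\hat u_t'/\hat u_0')^{n-1}(\hat u_t''/\hat u_0'')$, and \eqref{uref} gives $\hat u_t''/\hat u_0''=(b_t-a_t)/(b_0-a_0)\le 1$ and $a_t<\hat u_t'<b_t\le b_0$, so the ratio is $\le (b_0/a_0)^{n-1}$---this is where the Case~III hypothesis, which keeps $b_t-a_t$ bounded, enters. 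Evaluating \eqref{IIIMA} at a spatial maximum of $\varphi(t)$, where $\sqrt{-1}\,\partial\bar\partial\varphi(t)\le 0$, one gets $\omega^n\le\hat\omega_t^n\le C\,\Omega$ there, hence $\varphi(t)/t=\log(\omega^n/\Omega)\le\log C$ at that point and $\sup_X\varphi(t)\le t\log C\le C$.

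For the lower bound, note first that the maximum principle alone fails: it would only give $\inf_X\varphi(t)\ge t\log(\hat\omega_t^n/\Omega)$ evaluated at the spatial minimum, which degenerates because $\hat\omega_t^n/\Omega\to 0$ along $D_0$. Instead I would write $\omega=\hat\omega_t+\sqrt{-1}\,\partial\bar\partial\varphi(t)$, a K\"ahler form in the big class $\alpha_t$ with $\hat\omega_t\le C\,\omega_0$, and invoke the uniform $L^\infty$ estimate for the Monge--Amp\`{e}re equation on a big semipositive class: $\operatorname{osc}_X\varphi(t)$ is controlled by $\omega_0$, by the $L^p(\omega_0^n)$-norm (for some $p>1$) of the density $\omega^n/\omega_0^n=e^{\varphi(t)/t}\,\Omega/\omega_0^n$, and by a positive lower bound for $\int_X\omega^n=\int_X\alpha_t^n$. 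The mass satisfies $\int_X\alpha_t^n\ge b_T^n/k>0$, and the density is bounded above by the upper bound just proved, provided $t$ is bounded away from $0$ (for $t$ near $0$ the solution is smooth and $\omega(t)^n\to\omega_0^n$, so everything is bounded there directly). Hence $\operatorname{osc}_X\varphi(t)\le C$ uniformly; combining with $\sup_X\varphi(t)\le C$ and with $\sup_X\varphi(t)\ge -C$---which holds because $\int_X e^{\varphi(t)/t}\Omega=\int_X\alpha_t^n\ge b_T^n/k$ would be forced to $0$ if $\varphi(t)$ were uniformly very negative at its maximum---gives $\inf_X\varphi(t)\ge -C$, i.e. $\|\varphi(t)\|_{L^\infty(X)}\le C$; and then $C^{-1}\Omega\le\omega^n\le C\Omega$ follows since $\omega^n/\Omega=e^{\varphi(t)/t}$ with $|\varphi(t)/t|\le C$ for $t$ away from $0$ (and trivially for small $t$). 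The main obstacle is exactly this uniformity in $t$ as $t\to T$: one must check that the constants in the pluripotential estimate depend on $t$ only through the $L^p$-norm of the density, the total mass, and the bounds $\hat\omega_t\le C\omega_0$ and $\int_X\hat\omega_t^n\ge b_T^n/k>0$, all of which the Case~III hypothesis $a_0(n+k)<b_0(n-k)$ keeps under control (it gives $b_T>0$, so the volume does not collapse, and $b_t-a_t>0$ on all of $[0,T]$, so the reference forms stay dominated by a fixed K\"ahler metric).
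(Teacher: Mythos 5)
Your argument is correct and is essentially the proof the paper intends: the paper simply declares the proof ``identical to Lemma~\ref{Ilemphi}'', i.e.\ a maximum principle for the upper bound, the pluripotential estimates of \cites{DP10,EGZ08} for the lower bound, and the equivalence of the $L^\infty$ bound on $\varphi(t)$ with the two-sided volume bound. Your version merely fills in the details, correctly identifying why the constants stay uniform in Case III (the class stays big with non-collapsing volume and $\hat\omega_t^n\leqslant C\,\Omega$ since $b_t-a_t$ and $\hat u_t'$ stay bounded), so no further comment is needed.
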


The proof is identical to Lemma \ref{Ilemphi}. For the estimates on the potential $u_t(\rho)$, we prove the following Lemma. 

\begin{lem}\label{IIIulem}
The function $u= u_t(\rho)$ satisfies, for all $\rho \in \RR$,  
\begin{enumerate}
    \item  Bound on $u_t'(\rho)$,
    \begin{equation}\label{IIIulem1}
        (n-k)(T-t) < u_t^{\prime}(\rho)< (n+k)(T-t) + b_0, 
    \end{equation}
    for all $t\in [0,T)$. 
    \item Estimates on $u_t''(\rho)$. For all $\rho \in \RR$, there exists $C>0$ such that 
        \begin{equation}\label{IIIulem3}
        C^{-1}\frac{e^{k\rho}}{(1+e^{k\rho})^2} \leqslant u_t^{\prime \prime}(\rho) \leqslant \frac{Cu_t'(\rho)}{T-t},
        \end{equation}
    \item There exist $C = C(\omega_0)$ such that 
        \begin{equation}\label{IIIulem4}
        \frac{\left|u_t^{\prime \prime \prime}(\rho)\right|}{u_t''(\rho)} \leqslant \frac{C}{T-t}.
        \end{equation}
\end{enumerate}
\end{lem}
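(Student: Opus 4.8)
The plan is to handle the three items in turn. Equation~\eqref{IIIulem1} and the lower bound in \eqref{IIIulem3} are the arguments of Lemma~\ref{Iulem} with only cosmetic changes, whereas the upper bound in \eqref{IIIulem3} and estimate \eqref{IIIulem4} need a new ingredient, since in Case III the base metric collapses and $u_t'$ is no longer uniformly bounded below away from $0$. For \eqref{IIIulem1}, convexity of $u_t$ together with \eqref{u'ab} gives $a_t<u_t'(\rho)<b_t$; since $a_t=(n-k)(T-t)$ and $b_t=b_0-(n+k)t\leqslant b_0\leqslant (n+k)(T-t)+b_0$ this is exactly \eqref{IIIulem1}, and in particular $u_t'<b_0$ everywhere. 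For the lower bound in \eqref{IIIulem3}: since $-\eta\in C_1(X)$, the smooth volume form $\Omega$ differs from $\omega_0^n$ by a globally bounded factor, hence is comparable, with uniform constants, to $\det g_0=e^{-n\rho}(u_0')^{n-1}u_0''$; and $u_0''\sim \frac{e^{k\rho}}{(1+e^{k\rho})^2}$, these two functions having the same decay as $\rho\to\pm\infty$ by \eqref{DivPot1}--\eqref{DivPot2}. Combined with $\det g=e^{-n\rho}(u_t')^{n-1}u_t''$ and \eqref{IIIphi} this gives $(u_t')^{n-1}u_t''\geqslant C^{-1}\frac{e^{k\rho}}{(1+e^{k\rho})^2}$, and dividing by $(u_t')^{n-1}\leqslant b_0^{n-1}$ yields the lower bound.

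The heart of the matter is the estimate $u_t''\leqslant Cu_t'$, which gives the upper bound in \eqref{IIIulem3} since $T-t\leqslant T$. I would run a maximum principle in the variable $\rho$, at fixed $t$, for $F:=u_t''/u_t'=(\log u_t')'$. By the Calabi conditions, $u_t''(\rho)\to 0$ while $u_t'(\rho)\to a_t$ (resp.\ $b_t$) as $\rho\to-\infty$ (resp.\ $+\infty$), so $F$ is positive, tends to $0$ at both ends, and hence attains its maximum at some $\rho_0\in\RR$. At $\rho_0$ one has $F'(\rho_0)=0$, i.e.\ $u_t'''(\rho_0)=F(\rho_0)^2\,u_t'(\rho_0)$, whence $u_t'''(\rho_0)/u_t''(\rho_0)=F(\rho_0)$; substituting this into \eqref{u1} evaluated at $\rho_0$ gives $u_t'(\rho_0)=u_0'(\rho_0)+nt\,F(\rho_0)-nt$, so
\[
\sup_\rho F = F(\rho_0) = 1+\frac{u_t'(\rho_0)-u_0'(\rho_0)}{nt}\leqslant 1+\frac{b_0-a_0}{nt}.
\]
Thus $\sup_\rho F\leqslant 1+\tfrac{2(b_0-a_0)}{nT}$ for $t\in[T/2,T)$, while for $t\in[0,T/2]$ the function $F$, which extends continuously to $X$ (vanishing on $D_0\cup D_\infty$), is bounded by the smoothness of the solution on the compact set $X\times[0,T/2]$. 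Hence $u_t''\leqslant Cu_t'$ on $[0,T)$. Finally, rewriting \eqref{u1} as $u_t'''/u_t''=t^{-1}(u_t'-u_0'+nt)-(n-1)u_t''/u_t'$ and using $|u_t'-u_0'|\leqslant b_0-a_0$ together with the bound just proved gives $|u_t'''|/u_t''\leqslant C$ for $t\in[T/2,T)$, patched again by smoothness on $[0,T/2]$; as $T-t\leqslant T$ this is \eqref{IIIulem4}.

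The main obstacle is precisely $u_t''\leqslant Cu_t'$. In Cases I and II this followed from the volume form alone because $u_t'$ was uniformly bounded below; here the Monge--Amp\`ere equation controls only the product $(u_t')^{n-1}u_t''$, so by itself it yields no better than $u_t''\leqslant C(T-t)^{-(n-1)}$, and the Schwarz-type estimates of Case I are unavailable---indeed $\operatorname{tr}_\omega\chi=(n-1)/u_t'$ already blows up. The identity $F(\rho_0)=1+(u_t'(\rho_0)-u_0'(\rho_0))/(nt)$ at the interior maximum is exactly what turns the bounded oscillation of $u_t'$ into control of $u_t''/u_t'$; the only genuine bookkeeping is to absorb the $1/t$ factor coming from \eqref{u1}, which is harmless for $t$ away from $0$ and dealt with by short-time smoothness near $t=0$.
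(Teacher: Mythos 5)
Your proof is correct, and for the decisive estimate it takes a genuinely different route from the paper. Both arguments run a maximum principle in $\rho$ for the same quantity $H=u_t''/u_t'$, and both first check that $H$ vanishes as $\rho\to\pm\infty$ so that the supremum is attained at an interior point $\rho_0$. The paper then invokes the second-order condition $H''(\rho_0)\leqslant 0$ together with the twice-differentiated equation \eqref{u2}, which yields $H(\rho_0)\leqslant u_0''/u_t'\leqslant C/(T-t)$. You instead use only the first-order condition $H'(\rho_0)=0$, which forces $u_t'''/u_t''=u_t''/u_t'$ at $\rho_0$, and then read the value of $H(\rho_0)$ straight off the once-differentiated equation \eqref{u1}: $H(\rho_0)=1+\bigl(u_t'(\rho_0)-u_0'(\rho_0)\bigr)/(nt)\leqslant 1+(b_0-a_0)/(nt)$. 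This is simpler (no fourth derivatives, no use of \eqref{u2}) and strictly stronger: it gives $u_t''\leqslant Cu_t'$ and, via \eqref{u1} again, $|u_t'''|/u_t''\leqslant C$ for $t$ bounded away from $0$, rather than the $C/(T-t)$ rates in the statement (your bounds imply the stated ones since $T-t\leqslant T$). I checked your critical-point identity against the $t\to 0$ expansion of \eqref{cont} and it is consistent, and I could not find a flaw; note, however, that if your sharper bounds are propagated through \eqref{Rvu} and \eqref{ICurv}, the Case III curvature bound improves from $(T-t)^{-2}$ to Type I, so this is a genuine strengthening of the paper's result and worth recording explicitly. Two minor points: your treatment of $t\in[0,T/2]$ by compactness is fine but should note why $H$ is bounded on $X\times[0,T/2]$ including at the divisors (e.g.\ via $H\leqslant C\operatorname{tr}_{\hat g_0}g\cdot\operatorname{tr}_{g}\hat g_0$, both factors being continuous there); and you prove part (3) explicitly, a step the paper's written proof actually omits.
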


\begin{proof}
The estimate in \eqref{IIIulem1} follows from the definition of $a_t$ and $b_t$. The lower bound of \eqref{IIIulem3} can be derived from the volume estimate \eqref{IIIphi}. Similarly to the previous cases, 
$$
    C^{-1} \frac{e^{k \rho}}{(1+e^{k\rho})^2} \leqslant (u_t')^{n-1}u_t''
$$
Since in Case III, $u_t' \leqslant C$, the lower bound for \eqref{IIIulem3} follows. The same method however does not seem to produce effective bounds as in Case II since the volume is non collapsed. Hence the lower bound for $u_t'$ produces an upper bound for $u_t''$ of $(T-t)^{-(n-1)}$. To show the upper bound to part \eqref{IIIulem3}, consider $H_t(\rho) := u_t''(\rho) / u_t'(\rho)$. Using \eqref{DivPot1} and \eqref{DivPot2}, we note that 

\begin{equation}
    \lim_{\rho \rightarrow - \infty} H = \lim_{\rho \rightarrow - \infty} \frac{k^2e^{k \rho}u_{[D_0]}'(e^{k \rho})+k^2e^{2k \rho}u_{[D_0]}''(e^{k \rho})}{k^2e^{k \rho}u_{[D_0]}'(e^{k \rho}) + a_t} =0,
\end{equation}
\begin{equation}
    \lim_{\rho \rightarrow \infty} H = \lim_{\rho \rightarrow \infty} \frac{k^2e^{-k \rho}u_{[D_0]}'(e^{-k \rho})+k^2e^{-2k \rho}u_{[D_0]}''(e^{-k \rho})}{k^2e^{-k \rho}u_{[D_0]}'(e^{-k \rho}) + b_t} =0.
\end{equation}
Thus a maximum exists away from singular divisors. The derivatives of $H$ are given by
$$ H' = \frac{u_t'''}{u_t'} - \frac{u_t''^2}{u_t'^2}$$
and 
$$H'' = \frac{u_t^{(4)}}{u_t'} - \frac{ u_t'' u_t'''}{u_t'^2} + 2 \frac{u_t''^3}{u_t'^3}. $$
Using \eqref{u2} and the expression for $H'$, we rewrite the previous equality as 
\begin{equation}
    H'' = \frac{u_t'}{t}H^2 - \frac{u_0''}{t}H + \frac{(H'+H^2)^2}{H} - (n+2)H(H'+H^2)+(n+1)H^3.
\end{equation}
At maximum points $p_0$, the above equation yields 
$$ 0 \geqslant  \frac{u_t'}{t}H(p_0) - \frac{u_0''}{t}.$$
Using \eqref{IIIulem1}, we obtain 
\begin{equation}
    H(p_0) \leqslant \frac{u_0''}{u_t'} \leqslant \frac{C}{T-t}
\end{equation}
for some $C>0$ independent of $t \in [0,T)$. Since $u_t'$ is uniformly bounded, we obtain the upper bound in \eqref{IIIulem3}.
\end{proof}

\begin{prop}
The curvature tensor in Case III satisfies
\begin{equation}
    |\Rm(\omega)|_{\omega} \leqslant \frac{C}{(T-t)^2}.
\end{equation}
\end{prop}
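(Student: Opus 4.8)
The plan is to estimate the curvature norm using the explicit formula \eqref{ICurv}, bounding each of the three groups of terms separately with the help of Lemma \ref{IIIulem}. The key difference from Cases I and II is that the volume is non-collapsed, so the sharp control on $u_t''$ available there is lost; instead we have only the one-sided bound $u_t'' \leqslant C u_t'/(T-t)$ together with the lower bound $u_t'' \geqslant C^{-1} e^{k\rho}/(1+e^{k\rho})^2$, and the third-order quotient bound $|u_t'''|/u_t'' \leqslant C/(T-t)$. Since $u_t'$ itself is comparable to $(n-k)(T-t)$ from below (by \eqref{IIIulem1}) but only bounded above by a $t$-independent constant, the two "mixed" terms in \eqref{ICurv} will contribute at worst a factor $(T-t)^{-2}$: indeed $1/u_t' - u_t''/u_t'^2$ and $-u_t'''/(u_t' u_t'') + u_t''/u_t'^2$ are each $O((T-t)^{-1})\cdot O((T-t)^{-1}) = O((T-t)^{-2})$ once one uses $1/u_t' \leqslant C/(T-t)$ and $u_t''/u_t' \leqslant C/(T-t)$.

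The main work, and the main obstacle, is the $R_{1\bar1 1\bar1}$ term, i.e. controlling
\begin{equation*}
    -\frac{u_t^{(4)}}{u_t''^2} + \frac{(u_t''')^2}{u_t''^3},
\end{equation*}
which involves the fourth derivative $u_t^{(4)}$ that does not appear in any estimate proved so far. The strategy I would use is the same one that produced the scalar curvature bound in the earlier cases: express this combination in terms of $R(t)$ and lower-order quantities via \eqref{Rvu}, namely
\begin{equation*}
    \frac{(u_t''')^2}{u_t''^3} - \frac{u_t^{(4)}}{u_t''^2} = R(t) + \frac{2(n-1)}{u_t'}\frac{u_t'''}{u_t''} - \frac{n(n-1)}{u_t'} + \frac{(n-1)(n-2)u_t''}{u_t'^2},
\end{equation*}
and then bound the right-hand side. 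Each term on the right is $O((T-t)^{-2})$: the scalar curvature $R(t)$ satisfies $R(t) \leqslant C/(T-t)$ by the argument of Proposition \ref{IR} adapted to Case III (the proof there only used $u_0''/u_t' \le C$-type bounds and the limit computations at $\rho \to \pm\infty$, all of which survive here, in fact with an extra power of $(T-t)^{-1}$ to spare); the term $\tfrac{1}{u_t'}\tfrac{u_t'''}{u_t''}$ is $O((T-t)^{-2})$ by combining $1/u_t' \le C/(T-t)$ with \eqref{IIIulem4}; and the remaining two terms are $O((T-t)^{-1})$ and $O((T-t)^{-1})$ respectively. One still needs a lower bound for this combination as well, since \eqref{ICurv} squares it; the lower bound follows the same way, using that $R(t)$ is bounded below (the Ricci curvature, and hence the scalar curvature, is bounded below along \eqref{cont}) and that the lower-order terms are two-sided bounded by $C/(T-t)^2$.

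Assembling the pieces: each of the three bracketed expressions in \eqref{ICurv} is bounded in absolute value by $C/(T-t)^2$, so $|\Rm(\omega)|_\omega^2 \leqslant C/(T-t)^4$, giving the claimed estimate $|\Rm(\omega)|_\omega \leqslant C/(T-t)^2$. The only subtlety requiring care is the behaviour near the singular divisors $D_0 \cup D_\infty$, where $\rho \to \pm\infty$ and the factors $e^{-\rho}$, $x_i$ degenerate; as in Proposition \ref{IR} and Lemma \ref{IIIulem} one handles this by passing to the divisor potentials $u_{[D_0]}, u_{[D_\infty]}$ via \eqref{DivPot1}–\eqref{DivPot2} and checking that all the relevant quotients extend continuously (with finite limits) across $\rho = \pm\infty$, so the interior bounds propagate to all of $X$. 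I expect the fourth-derivative term to be the only genuinely delicate point; everything else is a bookkeeping exercise in tracking powers of $(T-t)$.
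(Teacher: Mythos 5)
Your proposal is correct and follows essentially the same route as the paper: bound each bracket in \eqref{ICurv} via Lemma \ref{IIIulem}, and handle the fourth-derivative term by solving \eqref{Rvu} for $\tfrac{(u_t''')^2}{u_t''^3}-\tfrac{u_t^{(4)}}{u_t''^2}$ and using the two-sided scalar curvature bound. The only quibble is bookkeeping: the term $u_t''/u_t'^2$ is a priori only $O((T-t)^{-2})$ in Case III, not $O((T-t)^{-1})$ as you state, but this does not affect the final $(T-t)^{-2}$ estimate.
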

\begin{proof}
The scalar curvature is bounded from above by
    $$R(t) = \frac{u_0''}{tu_t''} +(n-1)\frac{u_0'}{tu_t'} - \frac{n}{t} \leqslant \frac{C}{T-t}, $$
and we always have a lower scalar curvature bound. From \eqref{Rvu}, and using Lemma \ref{IIIulem}, we obtain
\begin{equation}
    -\frac{C_1}{(T-t)^2} \leqslant \frac{u_t'''^2}{u_t''^3} - \frac{u_t^{(4)}}{u_t''^2} \leqslant \frac{C_1}{(T-t)^2}. 
\end{equation}
The $u_t$ estimates also give us 
$$ -\frac{C_2}{T-t} \leqslant \frac{1}{u_t'} - \frac{u_t''}{u_t'^2}  \leqslant \frac{C_2}{T-t},$$
and
$$ -\frac{C_3}{(T-t)^2}  \leqslant - \frac{u_t'''}{u_t' u_t''} + \frac{u_t''}{u_t'^2 } \leqslant \frac{C_3}{(T-t)^2}. $$
Thus, the worst blow up term in the curvature tensor is of order $(T-t)^{-2}$.
\end{proof}

\section{Finite Time Singularity}\label{S6}
In this section, we prove Theorem \ref{THM3}; if the continuity method encounters a finite time singularity, the scalar curvature must necessarily blow up. We observe this behaviour in the analysis of Case I and II in the previous section. Furthermore, this result holds for the K\"ahler Ricci Flow (see \cite{Zh10}). 

We need some setup before proving Theorem \ref{THM3}. For convenience, we rescale \eqref{cont} by 
\begin{equation}
         (1+s) \omega(s) = \omega(t) \quad \text{with} \quad s = t. 
\end{equation}
The continuity method becomes
\begin{equation}\label{ncont}
    \begin{cases}
    (1+t) \omega(t) = \omega_0 - t \Ric(\omega(t)),\\
    \omega(0) = \omega_0.
    \end{cases}
\end{equation}
Choose a form $\omega_\infty$ in the canonical class $K_X$. By Yau's theorem there exists a $\Omega$ such that 
\begin{equation}
\omega_{\infty}=-\operatorname{Ric}(\Omega). 
\end{equation}

This allows us to define the reference metric, 
\begin{equation}\label{refmet1}
\omega_t := \frac{1}{1+t}\omega_0 + \frac{t}{1+t} \omega_\infty,
\end{equation}
which is obtained by formally solving the cohomology evolution of \eqref{ncont}. Then \eqref{cont} reduces to the scalar equation
\begin{equation}\label{scalcont1}
\begin{cases}
        \frac{1+t}{t} \varphi(t) = \log \left( \frac{(1+t)^r\omega^n}{\Omega} \right)\\
        \varphi(0) = 0.
\end{cases}
\end{equation}
The characterisation \eqref{SingT} of the maximum time of existence  still applies here and a solution to the Monge-Ampere equation above generates a solution to the continuity method. 

\begin{lem}\label{c0}
There exists a constant $C>0$ such that for any time $t \geqslant 1$ 
$$ \varphi(t) \leqslant C.$$ 
\end{lem}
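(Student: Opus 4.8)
The plan is to establish the uniform upper bound for $\varphi(t)$ via a maximum principle argument applied directly to the Monge--Amp\`ere equation \eqref{scalcont1}. First I would observe that since $\omega_\infty$ lies in $K_X$, which is nef on the region of time where the continuity method exists (as $\alpha_0 + tc_1(X) = (1+t)[\omega_t]$ stays K\"ahler up to $T$), the reference forms $\omega_t$ interpolate between $\omega_0$ and $\omega_\infty$ and the class $[\omega_t]$ remains in the closure of the K\"ahler cone but is uniformly bounded. The key point is to control $\log\left(\frac{(1+t)^r\omega_t^n}{\Omega}\right)$: expanding $\omega_t^n = \left(\frac{1}{1+t}\omega_0 + \frac{t}{1+t}\omega_\infty\right)^n$ and multiplying by $(1+t)^r$ (where $r = n$ here, matching the power so that the leading term stays of order $1$), one sees this quantity is bounded above by a constant independent of $t$ for $t \geqslant 1$, because $\omega_\infty^n$ may degenerate but only makes the volume smaller, while the cross terms $\omega_0^j \wedge \omega_\infty^{n-j}$ are controlled.

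The main step is then the maximum principle. At a point where $\varphi(t)$ (for fixed $t$) attains its spatial maximum over $X$, we have $\omega = \omega_t + \frac{t}{1+t}\sqrt{-1}\partial\bar\partial\varphi \leqslant \omega_t$ in the sense that $\sqrt{-1}\partial\bar\partial\varphi \leqslant 0$ there, hence $\omega^n \leqslant \omega_t^n$ at that point. Plugging into \eqref{scalcont1} gives
\begin{equation*}
\frac{1+t}{t}\max_X \varphi(t) = \log\left(\frac{(1+t)^r\omega^n}{\Omega}\right) \leqslant \log\left(\frac{(1+t)^r\omega_t^n}{\Omega}\right) \leqslant C,
\end{equation*}
and since $\frac{1+t}{t} \geqslant 1$ for $t \geqslant 1$, this yields $\max_X \varphi(t) \leqslant C$, which is the claim. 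The restriction $t \geqslant 1$ is exactly what makes the factor $\frac{1+t}{t}$ bounded below, so no further care is needed there; for $t$ near $0$ one would instead use continuity and compactness of $[0,1]$, but that is not required by the statement.

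The step I expect to be the main obstacle — or at least the one requiring the most care — is verifying that $\log\left(\frac{(1+t)^r\omega_t^n}{\Omega}\right)$ is genuinely bounded \emph{above} uniformly in $t$, since $\omega_\infty$ need not be a K\"ahler form (it only lies in $K_X$, which may be merely nef, or even just pseudoeffective in the generality of Theorem \ref{THM3}), so $\omega_t^n$ could vanish somewhere and one must ensure the degeneration does not interact badly with the normalizing power $(1+t)^r$. The resolution is that degeneration of $\omega_\infty$ only decreases $\omega_t^n$, so the upper bound is the easy direction; the quantity $\frac{(1+t)^n \omega_t^n}{\Omega} = \frac{(\omega_0 + t\omega_\infty)^n}{\Omega}$ is a polynomial of degree $n$ in $t$ with coefficients that are smooth functions on $X$ (ratios of the volume forms $\omega_0^j\wedge\omega_\infty^{n-j}$ against $\Omega$), hence for $t$ in a bounded range it is bounded, and for large $t$ one needs a separate argument — but in fact the statement only asserts boundedness for each fixed finite singular time $T$, so the range of $t$ is $[1,T)$ with $T<\infty$ exactly when there is a finite-time singularity, and on such a range the polynomial is trivially bounded above. (If $T = \infty$ there is nothing to prove for Theorem \ref{THM3}.) Thus the lemma reduces to the elementary observation that a polynomial in $t$ is bounded on a bounded interval, combined with the maximum principle above.
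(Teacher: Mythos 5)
Your proposal is correct and follows essentially the same route as the paper: bound $(1+t)^r\omega_t^n/\Omega$ uniformly by expanding the reference form and using $t<T<\infty$, then apply the maximum principle at a spatial maximum of $\varphi$ (where $\sqrt{-1}\partial\bar\partial\varphi\leqslant 0$ forces $\omega^n\leqslant\omega_t^n$) and absorb the factor $\frac{1+t}{t}\geqslant 1$. Your additional remarks on the possible degeneration of $\omega_\infty$ and on why the bounded range of $t$ suffices are consistent with, and slightly more careful than, the paper's terse argument.
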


\begin{proof}
Expanding out $\omega_t^n$, we have 
\begin{equation}
    \omega_t^n = \sum_{k=r}^n \binom{n}{k} \frac{t^{n-k}}{(1+t)^n} \omega_0^k \wedge \omega_\infty^{n-k} \leqslant C. 
\end{equation}
Here we have used that $t < T < \infty$. Applying the maximum principle and substituting the above into \eqref{scalcont1}, we obtain some $C>0$ such that 
$$\varphi(t) \leqslant C,$$
for all $t \in [0,T)$. 
\end{proof}
From \eqref{scalcont1}, we immediately have 
\begin{cor}\label{cor}
There exists $C>0$ such that for all $t \in [0,T)$,
\begin{equation}\label{volequiv1}
    \omega(t)^n \leqslant C (1+t)^{-r} \Omega. 
\end{equation}
\end{cor}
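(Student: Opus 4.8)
The plan is to exponentiate the Monge--Amp\`ere equation \eqref{scalcont1} and feed in the upper bound on $\varphi$ from Lemma \ref{c0}; this is essentially a one-line computation, as the statement advertises. Rearranging \eqref{scalcont1} gives, for $t\in(0,T)$,
\begin{equation}
\omega(t)^n = (1+t)^{-r}\, e^{\frac{1+t}{t}\varphi(t)}\,\Omega ,
\end{equation}
so the corollary reduces to showing that the scalar function $e^{\frac{1+t}{t}\varphi(t)}$ is bounded above by a constant independent of $t\in[0,T)$.

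For $t\geqslant 1$ this is immediate: one has $1\leqslant \frac{1+t}{t}\leqslant 2$, and $\varphi(t)\leqslant C$ by Lemma \ref{c0}, so $\frac{1+t}{t}\varphi(t)\leqslant 2\max(C,0)$ and hence $e^{\frac{1+t}{t}\varphi(t)}$ is uniformly bounded. For the complementary range $t\in[0,1)\cap[0,T)$ I would simply invoke that the solution $\omega(t)$ of the continuity method depends smoothly on $t$ over that compact time interval, so that $\omega(t)^n/\Omega$ is bounded on $X\times\big([0,1)\cap[0,T)\big)$; combined with $(1+t)^{-r}\geqslant 2^{-r}$ there this yields $\omega(t)^n\leqslant C\Omega\leqslant 2^r C\,(1+t)^{-r}\Omega$. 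Alternatively, and more in the spirit of \eqref{scalcont1}, one can note that since $\varphi(0)=0$ and $\varphi$ is smooth in $t$, the quotient $\varphi(t)/t$ stays bounded as $t\to 0^+$, so that $\frac{1+t}{t}\varphi(t)=(1+t)\,\tfrac{\varphi(t)}{t}$ is uniformly bounded above on all of $[0,T)$ once Lemma \ref{c0} is used away from $t=0$. Either way, combining with the displayed identity gives \eqref{volequiv1}.

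There is no serious obstacle here — the only point requiring any care is the apparent singularity of the coefficient $\frac{1+t}{t}$ at $t=0$, which is harmless precisely because $\varphi$ vanishes to first order there. I would also remark that this argument produces only the stated \emph{upper} bound on $\omega(t)^n$: an accompanying lower bound would require a lower bound on $\varphi$, equivalently a pluripotential-type estimate in the spirit of Lemma \ref{Ilemphi}, and is not needed for what follows, consistent with the fact that along \eqref{cont} it is the blow-up of curvature from above that is at issue.
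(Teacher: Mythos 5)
Your proposal is correct and matches the paper's argument, which likewise just exponentiates \eqref{scalcont1} and feeds in the upper bound on $\varphi$ from Lemma \ref{c0} (the paper states the corollary as immediate). Your extra care with the coefficient $\frac{1+t}{t}$ near $t=0$ is a point the paper glosses over, but it does not change the approach.
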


\begin{proof}[Proof of Theorem \ref{THM3}]

Suppose that $T< \infty$ is the maximal time given by \eqref{SingT} and assume by contradiction that $R(t) < C$ for some $C>0$ independent of $t$. From taking the trace of \eqref{cont}, we have 
$$ R = \frac{1}{t}\tr_{\omega} \omega_0 - n \left( \frac{1+t}{t}\right),$$
which implies 
\begin{equation}\label{tr1}
     \tr_{\omega} \omega_0 \leqslant C.
\end{equation}
To obtain a lower estimate for $\omega(t)$, we use the well known eigenvalue estimate and \eqref{volequiv1} to obtain a $C>0$ such that 
\begin{equation}\label{tr2}
    \tr_{\omega_0} \omega \leqslant (n-1)! (\tr_{\omega} \omega_0)^{n-1} \frac{\omega^n}{\omega_0^n} \leqslant C \frac{t^{n-1}}{(1+t)^r} \frac{\Omega}{\omega_0^n} \leqslant C,
\end{equation}
for all $t \in [0,T)$. The last inequality holds from our assumption $T< \infty$. Combining \eqref{tr1} and \eqref{tr2}, we conclude that the metric equivalence 
\begin{equation}\label{bad}
    C^{-1} \omega_0 \leqslant \omega(t) \leqslant C \omega_0.
\end{equation}
 To obtain the contradiction, we recall Demalli and  Paun's characterisation of K\"ahler metrics (Theorem 4.2 in \cite{D04})
 \begin{thm}
Let $(X, \omega_0)$ be a compact K\"ahler manifold and let $[\alpha]$ be a $(1,1)$ cohomology class in $H^{1,1}(X, \mathbb{R})$. Then $[\alpha]$ is K\"ahler if and only if for every irreducible analytic set $Y \subset X, \operatorname{dim} Y=n$, and every $t \geqslant 0$,
\begin{equation}\label{Dchar}
    \int_Y(\alpha+t \omega_0)^n>0. 
\end{equation}
 \end{thm}


However, the lower bound in \eqref{bad} reveals that \eqref{Dchar} holds for the classes $[\omega(t)]$ for all $t \in [0,T)$. From taking the limit, we see that the class $[\omega_T]$ also satisfies \eqref{Dchar} and thus, the limiting class is K\"ahler which contradicts the maximal time of existence.
\end{proof}


\bibliography{main}

\end{document}